\newtheorem{theorem}{Theorem}
\newtheorem{corollary}[theorem]{Corollary}
\newtheorem{claim}{Claim}
\title{On the maximum number of maximum independent sets}
\author{E. Mohr \and D. Rautenbach}
\date{}
\begin{document}

\maketitle

\begin{center}
Institut f\"{u}r Optimierung und Operations Research,
Universit\"{a}t Ulm, Ulm, Germany\\
\{\texttt{elena.mohr, dieter.rautenbach}\}\texttt{@uni-ulm.de}\\[3mm]
\end{center}

\newcommand{\mds}{\sharp\gamma}

\begin{abstract}
We give a very short and simple proof of Zykov's generalization 
of Tur\'{a}n's theorem,
which implies that 
the number of maximum independent sets
of a graph of order $n$ and independence number $\alpha$ 
with $\alpha<n$
is at most
$\left\lceil\frac{n}{\alpha}\right\rceil^{n\,{\rm mod}\,\alpha}
\left\lfloor\frac{n}{\alpha}\right\rfloor^{\alpha-(n\,{\rm mod}\,\alpha)}$.
Generalizing a result of Zito,
we show that 
the number of maximum independent sets
of a tree of order $n$ and independence number $\alpha$ is at most 
$2^{n-\alpha-1}+1$, if $2\alpha=n$, and,
$2^{n-\alpha-1}$, if $2\alpha>n$,
and we also characterize the extremal graphs.
Finally, we show that 
the number of maximum independent sets
of a subcubic tree of order $n$ and independence number $\alpha$ is at most 
$\left(\frac{1+\sqrt{5}}{2}\right)^{2n-3\alpha+1}$,
and we provide more precise results for extremal values of $\alpha$.
\end{abstract}
{\small
\begin{tabular}{lp{13.5cm}}
\textbf{Keywords:} & 
Tur\'{a}n graph; 
tree; 
independence number; 
maximum independent set;
Fibonacci number
\end{tabular}
}

\pagebreak

\section{Introduction}

We consider only finite, simple, and undirected graphs,
and use standard terminology and notation.
An {\it independent set} in a graph $G$
is a set of pairwise non-adjacent vertices of $G$.
The {\it independence number} $\alpha(G)$ of $G$ 
is the maximum cardinality of an independent set in $G$.
An independent set in $G$ is 
{\it maximal} if no proper superset is an independent set in $G$,
and 
{\it maximum} if it has cardinality $\alpha(G)$.
For a graph $G$, 
let $\sharp\alpha(G)$ be the number of maximum independent sets in $G$.

In the present paper 
we study the maximum number of maximum independent sets
as a function of the order and the independence number
in general graphs,
trees, 
and subcubic trees.
Before we come to our results, we mention some related research.

For a tree $T$ of order $n>1$, Zito \cite{zi} showed 
\begin{eqnarray}\label{e1}
\sharp\alpha(T)
& \leq &
\begin{cases}
2^{\frac{n-2}{2}}+1 & \mbox{, if $n$ is even, and}\\
2^{\frac{n-3}{2}} & \mbox{, if $n$ is odd.}
\end{cases}
\end{eqnarray}
Since $\alpha(T)\geq n/2$, it is not difficult to show that (\ref{e1}) implies
\begin{eqnarray}\label{e2}
\sharp\alpha(T) & \leq & 2^{\alpha(T)-1}+1,
\end{eqnarray}
cf.~\cite{aldamora} for a simple independent proof.
For similar results 
concerning the maximum number of maximal independent sets 
see \cite{kogodo,wl}.

Jou and Chang \cite{joch} observed that 
Moon and Moser's \cite{momo} result on the maximum number of maximal independent sets
implies
\begin{eqnarray*}
\sharp\alpha(G)
& \leq &
\begin{cases}
3^{\frac{n}{3}} & \mbox{, if $n\mod 3=0$,}\\
4\cdot 3^{\frac{n-4}{3}} & \mbox{, if $n\mod 3=1$, and}\\
2\cdot 3^{\frac{n-2}{3}} & \mbox{, if $n\mod 3=2$,}
\end{cases}
\end{eqnarray*}
for every graph $G$ of order $n$.
This is actually an immediate consequence of Zykov's generalization \cite{zy} 
of Tur\'{a}n's theorem \cite{tu};
independently shown also by Roman \cite{ro}.
For positive integers $n$ and $p$, let $T_p(n)$ be the complete $p$-partite graph 
with $n\,{\rm mod}\,p$ partite sets of order $\left\lceil\frac{n}{p}\right\rceil$
and $p-(n\,{\rm mod}\,p)$ partite sets of order $\left\lfloor\frac{n}{p}\right\rfloor$,
that is, $T_p(n)$ is the {\it Tur\'{a}n graph}.
A {\it clique} in a graph $G$
is a set of pairwise adjacent vertices of $G$.
For a graph $G$ and a positive integer $q$, 
let $\sharp\omega^{(p)}(G)$ be the number of cliques of order $p$ in $G$.

\begin{theorem}[Zykov \cite{zy}]\label{theoremzykov}
Let $n$, $q$, and $p$ be integers with $2\leq q<p\leq n$.
If $G$ is a graph of order $n$ with no clique of order $p$, then 
$\sharp\omega^{(q)}(G)\leq \sharp\omega^{(q)}\left(T_{p-1}(n)\right)$
with equality if and only if $G=T_{p-1}(n)$.
\end{theorem}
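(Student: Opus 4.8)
The plan is to follow \emph{Zykov's symmetrization}: first reduce to complete multipartite graphs, and then optimize an elementary symmetric polynomial. Call a triple $u,v,w$ of vertices \emph{bad} if $uv,vw\notin E$ and $uw\in E$, so that a graph is complete multipartite exactly when it has no bad triple. For a graph $H$ and a vertex $x$, let $f_H(x)$ be the number of $q$-cliques of $H$ through $x$. The elementary move is \emph{cloning}: given $x,y$ with $xy\notin E(H)$, delete $x$ and add a new vertex $x'$ with $N(x')=N_H(y)$. A $q$-clique of the new graph through $x'$ becomes, on replacing $x'$ by $y$, a clique of $H$ of the same size through $y$; since $x'$ and $y$ are non-adjacent, this shows cloning preserves the absence of a $K_p$, and it changes $\sharp\omega^{(q)}$ by exactly $f_H(y)-f_H(x)$ and the number of edges by exactly $\deg_H(y)-\deg_H(x)$. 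I will also use the ``double clone'' at a bad triple $u,v,w$, which deletes $u$ and $w$ and adds $u',w'$ with $N(u')=N(w')=N_H(v)$; a direct count shows it changes $\sharp\omega^{(q)}$ by $(f_H(v)-f_H(u))+(f_H(v)-f_H(w))+g$ and the number of edges by $2\deg_H(v)-\deg_H(u)-\deg_H(w)+1$, where $g$ is the number of $q$-cliques through both $u$ and $w$.

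Now let $G$ be a $K_p$-free graph of order $n$ with $\sharp\omega^{(q)}(G)$ maximum and, among these, with the largest number of edges. If $G$ had a bad triple $u,v,w$, then cloning $v$ from $u$ and from $w$ would force $f_G(u)\le f_G(v)$ and $f_G(w)\le f_G(v)$ (else $\sharp\omega^{(q)}$ strictly increases), and then the double clone would force $f_G(u)=f_G(v)=f_G(w)$ and $g=0$; cloning between the now equal-$f$ vertices $u,v$ and between $v,w$ would then force $\deg_G(u)=\deg_G(v)=\deg_G(w)$ (else the edge count strictly increases while $\sharp\omega^{(q)}$ is unchanged), after which the double clone keeps $\sharp\omega^{(q)}$ fixed but adds $2\deg_G(v)-\deg_G(u)-\deg_G(w)+1=1$ edge, a contradiction. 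Hence $G$ is complete multipartite. A complete multipartite graph with parts of sizes $a_1,\dots,a_r$ has exactly $e_q(a_1,\dots,a_r)$ $q$-cliques (the $q$-th elementary symmetric polynomial) and is $K_p$-free iff $r\le p-1$. If $r<p-1$, then some part has size at least $2$ (else $n=r\le p-2<n$), and splitting it into two parts changes $e_q$ by $bc$ times $e_{q-2}$ of the remaining $r-1$ part sizes; if $r=p-1$ and two parts differ in size by at least $2$, transferring one unit between them changes $e_q$ by $(a_i-a_j-1)$ times $e_{q-2}$ of the other $p-3$ part sizes. Since $q\le p-1$, these values of $e_{q-2}$ (elementary symmetric polynomials of sufficiently many positive integers) are strictly positive, so each operation strictly increases $\sharp\omega^{(q)}$. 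Therefore $T_{p-1}(n)$ is the \emph{unique} maximizer of $\sharp\omega^{(q)}$ among complete multipartite $K_p$-free graphs of order $n$; in particular $\sharp\omega^{(q)}(G)=\sharp\omega^{(q)}(T_{p-1}(n))$, which proves the inequality, and $e(T_{p-1}(n))$ is the largest number of edges among $K_p$-free graphs of order $n$ achieving the maximum value of $\sharp\omega^{(q)}$.

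The remaining point is the equality characterization, which I expect to be the main obstacle, since the reduction above only produces \emph{one} optimal complete multipartite graph and says nothing about optimal graphs with a bad triple. The plan is to rule the latter out. Suppose some $K_p$-free graph of order $n$ with the maximum value of $\sharp\omega^{(q)}$ has a bad triple; pick one, $H_0$, and iterate as follows: in the current optimal graph choose a bad triple $u,v,w$, note (by the first part of the previous paragraph, applied to an optimal graph) that $g=0$ there, and delete the edge $uw$; this destroys no $q$-clique, so the resulting graph is again optimal, and it has one fewer edge. The edge count strictly decreases at each step, so the process terminates, necessarily at an optimal graph with no bad triple, i.e.\ a complete multipartite one, which by the previous paragraph must be $T_{p-1}(n)$. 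But that graph has strictly fewer edges than $H_0$, whereas $e(H_0)\le e(T_{p-1}(n))$ by the previous paragraph — a contradiction. Hence every optimal $K_p$-free graph of order $n$ is complete multipartite, so it equals $T_{p-1}(n)$, and the proof is complete.
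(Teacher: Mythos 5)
Your proof is correct, and while it follows the same Zykov symmetrization skeleton as the paper (clone vertices across non-edges to force a complete multipartite structure, then balance the parts via elementary symmetric polynomials), it resolves the two delicate points differently. The paper first deletes from an optimal $G$ every edge lying in no $q$-clique, obtaining a graph $G_0$ with $\sharp\omega^{(q)}(G_0)=\sharp\omega^{(q)}(G)$; in $G_0$ every edge lies in a $q$-clique, so the quantity you call $g$ is automatically at least $1$ and the triple replacement is strictly improving, and the equality characterization then comes essentially for free, because adding any edge to $T_{p-1}(n)$ creates a $K_p$ and hence $G=G_0$. You instead keep $G$ intact and use the number of edges as a tie-breaker: single clones give $f(u)=f(v)=f(w)$, the double clone then gives $g=0$, degree comparisons give $\deg_G(u)=\deg_G(v)=\deg_G(w)$, and the double clone finally gains exactly one edge at constant $\sharp\omega^{(q)}$, a clean contradiction. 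Your uniqueness argument --- repeatedly deleting the edge $uw$ of a bad triple, which is harmless since $g=0$ there, and comparing edge counts with $T_{p-1}(n)$ --- is a genuinely different and somewhat longer route than the paper's one-line saturation observation, but it is sound. The only point worth tightening is the positivity of $e_{q-2}$ in the splitting step when $r<p-1$: there you need $r-1\geq q-2$, which does not follow from $q\leq p-1$ alone but from the observation that an optimal complete multipartite graph must contain a $q$-clique (so $r\geq q$), since $T_{p-1}(n)$ does.
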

As our first contribution, 
we give a very short and simple proof of Theorem \ref{theoremzykov}
inspired by the 5th {\it proof from The Book} \cite{aizi} of Tur\'{a}n's theorem.
Applying the special case $q=p-1$ of Theorem \ref{theoremzykov} 
to the complement $\bar{G}$ of a graph $G$
immediately implies the following.

\begin{corollary}\label{corollary1}
If $G$ is a graph of order $n$ and independence number $\alpha$ 
with $\alpha<n$, then
\begin{eqnarray}\label{ecor}
\sharp\alpha(G)\leq 
\left\lceil\frac{n}{\alpha}\right\rceil^{n\,{\rm mod}\,\alpha}
\left\lfloor\frac{n}{\alpha}\right\rfloor^{\alpha-(n\,{\rm mod}\,\alpha)}.
\end{eqnarray}
Furthermore, equality holds in (\ref{ecor})
if and only if $G$ is the complement of $T_{\alpha}(n)$.
\end{corollary}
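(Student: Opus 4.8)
The plan is to derive Corollary \ref{corollary1} directly from the case $q=p-1$ of Theorem \ref{theoremzykov} by passing to the complement. First I would observe that an independent set in $G$ is precisely a clique in the complement $\bar{G}$, so $\alpha(G)=\omega(\bar{G})$ and, more importantly, $\sharp\alpha(G)=\sharp\omega^{(\alpha)}(\bar{G})$, the number of cliques of order $\alpha$ in $\bar{G}$. Setting $q=\alpha$ and $p=\alpha+1$, the hypothesis $\alpha<n$ guarantees $2\le q<p\le n$, and since $\omega(\bar{G})=\alpha$ the graph $\bar{G}$ has no clique of order $p=\alpha+1$. Theorem \ref{theoremzykov} then yields $\sharp\alpha(G)=\sharp\omega^{(\alpha)}(\bar{G})\le\sharp\omega^{(\alpha)}(T_{\alpha}(n))$, with equality if and only if $\bar{G}=T_{\alpha}(n)$, i.e.\ $G=\overline{T_{\alpha}(n)}$.

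It remains to evaluate $\sharp\omega^{(\alpha)}(T_{\alpha}(n))$ and check it equals the claimed bound. The Tur\'an graph $T_{\alpha}(n)$ is complete $\alpha$-partite with $r:=n\bmod\alpha$ parts of size $\lceil n/\alpha\rceil$ and $\alpha-r$ parts of size $\lfloor n/\alpha\rfloor$. A clique of order $\alpha$ in a complete $\alpha$-partite graph must pick exactly one vertex from each of the $\alpha$ parts (it cannot take two from the same part, and with only $\alpha$ parts it must hit every part to reach size $\alpha$). Hence the number of such cliques is the product of the part sizes, namely $\lceil n/\alpha\rceil^{r}\lfloor n/\alpha\rfloor^{\alpha-r}$, which is exactly the right-hand side of (\ref{ecor}). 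This also confirms that $\omega(T_{\alpha}(n))=\alpha$, so the equality characterization is not vacuous.

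I do not anticipate a genuine obstacle here: the argument is a short dualization plus a one-line combinatorial count, and all the real work has been done in Theorem \ref{theoremzykov}. The only points requiring a word of care are verifying the index inequality $2\le q<p\le n$ (which is where the hypothesis $\alpha<n$ is used, and one should note $\alpha\ge 1$ so $q=\alpha$ could in principle be $1$ — but if $\alpha=1$ then $G$ is complete and $\sharp\alpha(G)=n=\lceil n/1\rceil^{0}\cdot\lfloor n/1\rfloor^{1}$ trivially, so one may assume $\alpha\ge 2$, or simply remark that the edge-count case $q=1$ of Zykov's statement also holds), and making explicit that cliques of order $\alpha$ in $T_\alpha(n)$ are exactly the transversals of the partition. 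With those remarks in place the proof is complete.
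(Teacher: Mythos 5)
Your proof is correct and follows exactly the route the paper intends: apply the $q=p-1$ case of Theorem \ref{theoremzykov} to $\bar{G}$ with $q=\alpha$, $p=\alpha+1$, and count the $\alpha$-cliques of $T_\alpha(n)$ as transversals of the partition. Your explicit check of the hypothesis $2\le q<p\le n$ and the separate treatment of $\alpha=1$ are details the paper leaves implicit, but the argument is the same.
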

Corollary \ref{corollary1} also follows from a result of Nielsen \cite{ni} 
who showed that the right hand side of (\ref{ecor})
is a tight upper bound on the number 
of maximal independent sets of cardinality exactly $\alpha$
for every graph $G$ of order $n$ regardless of the independence number of $G$.

Our further results concern trees and subcubic trees.

The next result is a common generalization of (\ref{e1}) and (\ref{e2}).

\begin{theorem}\label{theorem1}
If $T$ is a tree of order $n$ and independence number $\alpha$, then 
\begin{eqnarray}\label{e4}
\sharp\alpha(T)
& \leq &
\begin{cases}
2^{n-\alpha-1}+1 & \mbox{, if $2\alpha=n$, and}\\
2^{n-\alpha-1} & \mbox{, if $2\alpha>n$.}
\end{cases}
\end{eqnarray}
Furthermore, equality holds in (\ref{e4})
if and only if $T$ arises by subdividing $n-\alpha-1$ edges of $K_{1,\alpha}$ once.
\end{theorem}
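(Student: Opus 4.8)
\noindent The plan is to prove the bound (\ref{e4}) and the characterisation of equality simultaneously, by induction on $n$. Since a tree is bipartite we have $\alpha\ge n/2$, so the two cases in (\ref{e4}) are exhaustive. As base cases, and more generally whenever $T=K_{1,n-1}$, one has $\alpha=n-1$ and $\sharp\alpha(T)=1$ (or $\sharp\alpha(T)=2$ if $n=2$), so (\ref{e4}) holds with equality and $K_{1,n-1}$ is precisely the graph obtained from $K_{1,\alpha}$ by subdividing $n-\alpha-1=0$ edges; in particular this disposes of $n\le 3$. So assume from now on that $T$ is not a star. Fix a longest path of $T$, let $u$ be one of its endvertices, let $v$ be the neighbour of $u$ on this path and $w$ the next vertex; since $T$ is not a star, its diameter is at least $3$, hence $w$ is not a leaf, and by maximality of the path every neighbour of $v$ other than $w$ is a leaf. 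Set $k:=n-\alpha$; as $T$ is not a star, $\alpha\le n-2$, so $k\ge 2$. I distinguish two cases.

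\smallskip
\noindent\emph{Case A: $v$ has at least two leaf-neighbours}, forming a set $L$. Let $T'=T-v-L$. This is a tree, and since $w$ has a neighbour in $T'$ other than $v$, we have $n(T')\ge 2$. The (routine) observations are: no maximum independent set of $T$ contains $v$; every maximum independent set of $T$ contains $L$; and $M\mapsto M\setminus L$ is a bijection from the maximum independent sets of $T$ to those of $T'$. Hence $\sharp\alpha(T)=\sharp\alpha(T')$ and $\alpha(T')=\alpha-|L|$, so $n(T')-\alpha(T')=k-1$. The induction hypothesis gives $\sharp\alpha(T')\le 2^{k-2}+1\le 2^{k-1}$ (using $k\ge 2$), which respects (\ref{e4}). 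Equality in (\ref{e4}) for $T$ would force $k=2$ and $\sharp\alpha(T')=2$, hence $T'=K_2$; re-attaching $v$ and its leaves then produces exactly $K_{1,\alpha}$ with one subdivided edge.

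\smallskip
\noindent\emph{Case B: $v$ has exactly one leaf-neighbour $u$}, so $\deg(v)=2$ and $N(v)=\{u,w\}$. Let $T'=T-u-v$, a tree with $n(T')=n-2\ge 2$ and (as one checks) $\alpha(T')=\alpha-1$; thus $n(T')-\alpha(T')=k-1$, and $2\alpha(T')=n(T')$ if and only if $2\alpha=n$. Every maximum independent set of $T$ contains exactly one of $u$ and $v$; write $\mathcal A$, resp.\ $\mathcal B$, for those containing $u$, resp.\ $v$. Then $M\mapsto M\setminus\{u\}$ is a bijection from $\mathcal A$ to the maximum independent sets of $T'$, while $M\mapsto(M\setminus\{v\})\cup\{u\}$ is an injection of $\mathcal B$ into $\mathcal A$ with image $\{M\in\mathcal A:w\notin M\}$. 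Consequently $\sharp\alpha(T)=|\mathcal A|+|\mathcal B|=2\,\sharp\alpha(T')-r$, where $r$ is the number of maximum independent sets of $T'$ containing $w$. If $r\ge 1$, the induction hypothesis gives $\sharp\alpha(T)\le 2^{k-1}+1$ when $2\alpha=n$ and $\sharp\alpha(T)\le 2^{k-1}-1$ when $2\alpha>n$, both compatible with (\ref{e4}). If $r=0$, then $\sharp\alpha(T)=2\,\sharp\alpha(T')$, and here $\sharp\alpha(T')\le 2^{k-2}$: otherwise the induction hypothesis would force $T'$ to be the extremal tree for the equality $2\alpha(T')=n(T')$, but every vertex of that tree lies in some maximum independent set, contradicting $r=0$. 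So $\sharp\alpha(T)\le 2^{k-1}$, again within (\ref{e4}).

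\smallskip
\noindent The small lemmas needed for the two reductions are routine counting arguments. The main obstacle, which I would treat last, is the equality characterisation: one reverses the reductions, and the structural facts needed are that in the extremal tree for the case $2\alpha>n$ the only vertex lying in no maximum independent set is the centre, and that in the extremal tree for the case $2\alpha=n$ a vertex lies in a \emph{unique} maximum independent set only if it is the centre (or, in the degenerate case of a single subdivided edge, a subdivision vertex). These force $T'$ in Case~B to be the appropriate extremal tree with $w$ at its centre (or at the exceptional vertex in the small case), so that re-attaching $u$ and $v$ reproduces $K_{1,\alpha}$ with $n-\alpha-1$ edges subdivided; Case~A is similar and shorter. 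Finally, the converse direction — that this tree attains the bound in (\ref{e4}) — is a direct count of its maximum independent sets.
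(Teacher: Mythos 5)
Your argument is correct and follows essentially the same route as the paper's proof: induction on $n$ via deleting the deepest support vertex of a longest path together with its leaf-children, with your Cases A and B matching the paper's $k\ge 2$ and $k=1$ cases, and your exact relation $\sharp\alpha(T)=2\sharp\alpha(T')-r$ being a refinement of the paper's inequality $\sharp\alpha(T)\le 2\sharp\alpha(T')$ (strict when some maximum independent set of $T'$ contains $w$). The only substantive difference is the subcase $2\alpha=n$ with $T'$ extremal, where the paper enumerates three explicit attachment configurations while you instead invoke the (correct and easily verified) facts about which vertices of the extremal spider lie in no, or in a unique, maximum independent set; both finishes work.
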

As it turns out, 
the maximum number of maximum independent sets in subcubic trees
is closely related to the famous Fibonacci numbers.
Let $f(n)$ denote the $n$-th {\it Fibonacci number}, that is, 
$$f(n)=
\begin{cases}
0 & \mbox{, if $n=0$,}\\
1 & \mbox{, if $n=1$,}\\
f(n-1)+f(n-2) & \mbox{, if $n\geq 2$.}
\end{cases}$$
Our first result for subcubic trees
concerns the smallest possible value of the independence number
in (subcubic) trees.
For a positive integer $k$, let $T(k)$ arise by attaching a new endvertex
to every vertex of a path of order $k$.
Since 
\begin{eqnarray*}
\sharp\alpha(T(1))&=&2, \\
\sharp\alpha(T(2))&=&3,\mbox{ and}\\
\sharp\alpha(T(k))&=&\sharp\alpha(T(k-1))+\sharp\alpha(T(k-2))\mbox{ for every $k\geq 3$},
\end{eqnarray*}
we obtain
$$\sharp\alpha(T(k))=f(k+2)$$ 
for every positive integer $k$.
\begin{theorem}\label{theorem2}
If $T$ is a subcubic tree of order $n$ and independence number $\alpha=\frac{n}{2}$, then 
\begin{eqnarray}\label{e9}
\sharp\alpha(T)
& \leq & f(\alpha+2)
\end{eqnarray}
Furthermore, equality holds in (\ref{e9})
if and only if $T=T(\alpha)$.
\end{theorem}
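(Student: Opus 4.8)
The plan is to exploit the \emph{perfect matching} structure forced by the hypothesis. Since $T$ is bipartite, $\alpha(T)=n-\mu(T)$ where $\mu(T)$ is the matching number, so $\alpha=\frac{n}{2}$ is equivalent to $T$ having a perfect matching $M$, and in a tree $M$ is unique. The crucial feature is that every maximum independent set of $T$ contains exactly one endpoint of each edge of $M$, which makes vertex deletions interact cleanly with $\sharp\alpha$: if $p\in V(T)$ has $M$-partner $p'$, then $\alpha(T-p)=\alpha(T)$, so $\sharp\alpha(T-p)$ is exactly the number of maximum independent sets of $T$ avoiding $p$, which in turn equals $\sharp\alpha\big(T-N_T[p']\big)$; analogous bookkeeping holds when one deletes a few vertices at once. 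The proof is by induction on $n$, with $K_2=T(1)$ and $P_4=T(2)$ as base cases, and the $\Leftarrow$ direction of the equality statement is already supplied by the recursion for $\sharp\alpha(T(k))$ in the excerpt.

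For the inductive step I would root $T$ at an arbitrary vertex and take a leaf $u$ of maximum depth, with parent $v$ and grandparent $x$. Because $T$ has a perfect matching no vertex is adjacent to two leaves, and maximality of the depth of $u$ forces all children of $v$ to be leaves; hence $v$ has the unique child $u$ and $\deg_T(v)=2$, so $uv\in M$. One then splits according to the local picture at $x$: whether $\deg_T(x)=2$ or $3$, and, when $\deg_T(x)=3$, whether its third neighbour again sits at the bottom of the tree. In each case $x$ is adjacent to its $M$-partner $y$, and deleting a suitable one of $\{v\}$, $\{u,v\}$, $\{u,v,x\}$, $N_T[x]$ (together with its closed-neighbourhood consequences) expresses $\sharp\alpha(T)$ as a sum of one or two products $\sharp\alpha(S_1)\sharp\alpha(S_2)$, where each $S_i$ arises from $T$ by removing $u$ and a few of $v,x,y$ and their neighbours. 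Using the identity above, every $S_i$ is a disjoint union of subcubic trees each of which either again has a perfect matching or carries a near-perfect matching with a distinguished uncovered vertex $p$; in the latter case, copying the structure of $T(k)$, one shows that every maximum independent set of that piece must contain $p$, so $\sharp\alpha(S_i)$ is again a count of the first type on something strictly smaller, and the recursion closes.

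The real work is choosing a strengthened induction hypothesis that survives all these reductions: the bare bound $\sharp\alpha(T)\le f(\alpha+2)$ is not self-propagating, since the reductions also produce quantities $\sharp\alpha(S-p)$ for degree-$\le 2$ vertices $p$, and $\sharp\alpha$ of subcubic trees (and disjoint unions thereof) carrying only a near-perfect matching. I would therefore carry along, proved simultaneously, companion Fibonacci bounds for exactly these auxiliary quantities, each tight on the relevant $T(k)$-type configuration (e.g.\ $\sharp\alpha(S-p)$ is largest when $S=T(k)$ and $p$ is a spine end), each with its own equality description, together with the elementary estimate $\prod_i f(m_i)\le f\big(\sum_i m_i\big)$ that handles disjoint unions; pinning down the precise form of this family — at once true, tight on the caterpillars, and closed under every local reduction — is the delicate point, and I expect it to be the main obstacle rather than any single estimate. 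Feeding the companion bounds into the case-by-case expressions for $\sharp\alpha(T)$ and collapsing with $f(t)=f(t-1)+f(t-2)$ and the addition formula $f(a+b)=f(a)f(b+1)+f(a-1)f(b)$ yields $\sharp\alpha(T)\le f(\alpha+2)$; reading the chain of inequalities backwards, equality forces exactly the reduction whose target is $T(\alpha-2)$ with its distinguished vertex at a spine end, and unwinding this shows $T=T(\alpha)$. The case analysis at $x$ and the disjoint-union bookkeeping, while lengthy, are then routine.
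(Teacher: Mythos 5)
Your setup (the perfect matching $M$ forced by $\alpha=\tfrac{n}{2}$, its uniqueness, the fact that every maximum independent set picks exactly one endvertex of each matching edge, and the deepest leaf $u$ with $d_T(v)=2$ and $uv\in M$) is all correct, but the proposal stops exactly where the proof has to start. You yourself identify the crux: the bare bound $\sharp\alpha(T)\le f(\alpha+2)$ does not propagate through your reductions, because deleting $N_T[v]$ or $N_T[x]$ produces forests carrying only near-perfect matchings, together with counts of maximum independent sets that contain, or avoid, a distinguished vertex. You propose to repair this with a simultaneously proved family of companion Fibonacci bounds, each tight on a $T(k)$-type configuration and each with its own equality description, but you never state a single member of that family, let alone verify that it is closed under all of your case splits at $x$, that the product estimate handles the disconnections, or that the equality analysis unwinds to $T=T(\alpha)$. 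Since you concede that pinning down this family is ``the main obstacle,'' what you have is a plan with the hard part left open, not a proof: this is a genuine gap.

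For comparison, the paper avoids any strengthened hypothesis by two devices you do not use. First, among smallest-order counterexamples it chooses $T$ maximizing $\sharp\alpha(T)$; this licenses exchange arguments in which a local rewiring of $T$ (preserving order, subcubicity, and $\alpha=\tfrac{n}{2}$) strictly increases the number of maximum independent sets, and these eliminate four of the five local configurations around the second edge of a longest path in the tree $\tilde T$ obtained by contracting the perfect matching. Second, in the one surviving configuration the identity $2\sharp\alpha^-_\in+3\sharp\alpha^-_{\not\in}=\big(\sharp\alpha^-_\in+2\sharp\alpha^-_{\not\in}\big)+\big(\sharp\alpha^-_\in+\sharp\alpha^-_{\not\in}\big)$ rewrites $\sharp\alpha(T)$ as $\sharp\alpha(T'')+\sharp\alpha(T')$ for two strictly smaller subcubic trees that again have perfect matchings, so the unstrengthened induction hypothesis applies verbatim and $f(\alpha+1)+f(\alpha)=f(\alpha+2)$ closes the recursion, with equality forcing $T'=T(\alpha-2)$ and $T''=T(\alpha-1)$, hence $T=T(\alpha)$. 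To complete your route you would need either to import this extremal/exchange idea or to actually write down and prove the companion bounds; as it stands the argument is not complete.
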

Our second result for subcubic trees
concerns the largest possible value of the independence number
in subcubic trees.
If $T$ is a tree, 
then $T'$ {\it arises from $T$ by attaching a $P_3$}
if $V(T)$ is the disjoint union of $V(T)$ and $\{ x,y,z\}$,
and $E(T)=E(T')\cup \{ uy,xy,yz\}$,
where $u$ is some vertex of $T$.

\begin{theorem}\label{theorem3}
If $T$ is a subcubic tree of order $n$ and independence number $\alpha$, 
then 
\begin{eqnarray}\label{e11}
\alpha(T)
& \leq & \frac{2n+1}{3}.
\end{eqnarray}
Furthermore, equality holds in (\ref{e11})
if and only if $T$ arises from $K_1$ by iteratively attaching $P_3$s,
in which case $\sharp\alpha(T)=1$.
\end{theorem}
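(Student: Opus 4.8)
The plan is to prove the bound in the equivalent form $\varphi(T)\le 1$, where $\varphi(T):=3\alpha(T)-2|V(T)|$, by induction on $|V(T)|$, and to establish along the way the implication that $\varphi(T)=1$ forces $T$ to lie in the family $\mathcal{F}$ of trees obtained from $K_1$ by iteratively attaching $P_3$s. The remaining assertions are then settled separately: a short exchange argument shows that attaching a $P_3$ leaves $\varphi$ unchanged and preserves the property of having a unique maximum independent set (the new unique one being the old one together with the two added leaves $x$ and $z$); since $\varphi(K_1)=1$ and $\sharp\alpha(K_1)=1$, this shows that every $T\in\mathcal{F}$ satisfies $\varphi(T)=1$ and $\sharp\alpha(T)=1$, which yields both the converse direction of the characterization and the final sentence of the theorem.

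For the induction, the base cases $|V(T)|\le 3$ are immediate, as $\varphi(K_1)=1$, $\varphi(K_2)=-1$, $\varphi(P_3)=0$, and $K_1\in\mathcal{F}$. So let $T$ be a subcubic tree with $n:=|V(T)|\ge 4$, and fix a longest path $v_0v_1\cdots v_k$ of $T$; then $v_0$ is a leaf, $k\ge 2$ (as $n\ge 4$), and $\deg_T(v_1)\in\{2,3\}$ because $T$ is subcubic. In the first case $\deg_T(v_1)=2$, and with $T'=T-\{v_0,v_1\}$, a subcubic tree on $n-2$ vertices, an exchange argument --- a maximum independent set of $T$ meets the edge $v_0v_1$ in at most one vertex, while any maximum independent set of $T'$ can be extended by $v_0$, since the only neighbour of $v_0$ in $T$ is $v_1\notin V(T')$ --- gives $\alpha(T)=\alpha(T')+1$, hence $\varphi(T)=\varphi(T')-1\le 0<1$ by induction, so the inequality is strict here. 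In the second case $\deg_T(v_1)=3$, and maximality of the path forces the third neighbour $v_0'$ of $v_1$ to be a leaf, so $v_0,v_1,v_0'$ form a pendant $P_3$ attached at $v_2$; with $T'=T-\{v_0,v_1,v_0'\}$, a subcubic tree on $n-3$ vertices, an exchange argument (since $v_1$ is adjacent to both $v_0$ and $v_0'$, a maximum independent set of $T$ contains at most two of these three vertices, while any maximum independent set of $T'$ can be extended by the two leaves $v_0$ and $v_0'$) gives $\alpha(T)=\alpha(T')+2$, hence $\varphi(T)=\varphi(T')\le 1$ by induction. Moreover $T$ arises from $T'$ by attaching a $P_3$ (taking $y=v_1$, $\{x,z\}=\{v_0,v_0'\}$, $u=v_2$), so $\varphi(T)=1$ implies $\varphi(T')=1$, whence $T'\in\mathcal{F}$ by induction and therefore $T\in\mathcal{F}$; as the first case never produces equality, this completes the induction.

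The computational steps --- the exchange arguments above and the invariance of $\varphi$ and of the property ``$\sharp\alpha(\cdot)=1$'' under attaching a $P_3$ --- are all routine. The point that demands care is the case distinction on $\deg_T(v_1)$: it must be exhaustive, and in the degree-three case the deleted set must be recognised as a pendant $P_3$ in precisely the sense fixed just before the theorem, since that is exactly what delivers $T\in\mathcal{F}$ in the equality case. This is also where subcubicity is used essentially: if $v_1$ were allowed degree four or more, the pendant star at $v_1$ could contain three or more leaves, and deleting it would \emph{increase} $\varphi$, so the bound $\frac{2n+1}{3}$ would fail. I would additionally check the smallest instance of the second case, $n=4$ with $T=K_{1,3}$ and $T'=K_1$, to confirm consistency with the statement (indeed $K_{1,3}\in\mathcal{F}$ and $\sharp\alpha(K_{1,3})=1$).
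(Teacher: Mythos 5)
Your proposal is correct and follows essentially the same route as the paper: both delete the pendant star $N_T[v_1]\setminus\{v_2\}$ at the penultimate vertex of a longest path, observe that the independence number drops by $d_T(v_1)-1$, and conclude by induction, with equality forcing $d_T(v_1)=3$ and hence the attached-$P_3$ structure. Your reformulation via the potential $\varphi(T)=3\alpha(T)-2n$ and the explicit two-case split are only cosmetic differences, and your treatment of the equality case and of the uniqueness of the maximum independent set is, if anything, slightly more detailed than the paper's.
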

For given positive integers $n$ and $\alpha$ with $\alpha\leq \frac{2n+1}{3}$,
suitably combining the extremal trees 
from Theorem \ref{theorem2}
and Theorem \ref{theorem3} allows to construct subcubic trees
with order $n$ and independence number $\alpha$
that satisfy
$$\sharp\alpha(T)=\Omega\Big(f(2n-3\alpha+1)\Big).$$
This implies that our last result for subcubic trees
is best possible 
up to small constant factors and additive terms.

\begin{theorem}\label{theorem4}
If $T$ is a subcubic tree of order $n$ and independence number $\alpha$, then $$\sharp\alpha(T)\leq \left(\frac{1+\sqrt{5}}{2}\right)^{2n-3\alpha+1}.$$
\end{theorem}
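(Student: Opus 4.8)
The plan is to prove this by induction on $n$, using the identity $\varphi^{2}=\varphi+1$ of the golden ratio $\varphi=\frac{1+\sqrt{5}}{2}$, which gives $\varphi^{x}=\varphi^{x-1}+\varphi^{x-2}$ for every real $x$. For a tree $T$ of order $n$ with $\alpha(T)=\alpha$, write $g(T)=2n-3\alpha+1$, so the claim is $\sharp\alpha(T)\le\varphi^{g(T)}$. Since $\sharp\alpha$ is multiplicative over connected components while $n$ and $\alpha$ are additive, I would apply the induction hypothesis in the slightly stronger form that a subcubic forest $F$ with $c$ components, each of order smaller than $n$, satisfies $\sharp\alpha(F)\le\varphi^{\,2n(F)-3\alpha(F)+c}$; for a tree this is exactly the desired bound, and each additional component costs one factor $\varphi$. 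The cases $n\le 2$ serve as the base of the induction and are trivial.

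For the inductive step with $n\ge 3$, I would root $T$ at a leaf, take a leaf $v$ of maximum depth, and let $u$ be its parent. Every child of $u$ is then a leaf, and since $u$ is not the root (as $n\ge 3$ and we rooted at a leaf) it has a parent, so by subcubicity it has exactly one or exactly two children. If $u$ has two leaf children $v,v'$, the standard observation that every maximum independent set of $T$ must contain both $v$ and $v'$ (and hence not $u$) yields $\sharp\alpha(T)=\sharp\alpha(T-\{u,v,v'\})$; deleting this pendant subtree leaves a subcubic tree of order $n-3$ and independence number $\alpha-2$, for which $g$ takes the same value, so the induction hypothesis finishes this case.

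The remaining case, $u$ with a unique child $v$, is the heart of the argument; here $u$ has degree $2$ with parent $w$, so $N_{T}[u]=\{u,v,w\}$. I would partition the maximum independent sets of $T$ according to whether they contain $v$. Those containing $v$ are in bijection with the maximum independent sets of the subcubic tree $T-\{u,v\}$, which has independence number $\alpha-1$ and hence satisfies $g(T-\{u,v\})=g(T)-1$; this part contributes at most $\varphi^{g(T)-1}$. A maximum independent set not containing $v$ must contain $u$, hence avoid $w$, so these sets are in bijection with the independent sets of $F:=T-\{u,v,w\}$ of size $\alpha-1$. Since $\alpha(F)\le\alpha-1$ (otherwise such a set could be enlarged by $v$), either there are none of them, or $\alpha(F)=\alpha-1$ and their number equals $\sharp\alpha(F)$.

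The main obstacle is that $F$ can be disconnected, and the forest bound loses a factor $\varphi$ per extra component, so one must bound the number of components of $F$. This is exactly where subcubicity is indispensable: $w$ has degree at most $3$ in $T$ and loses its neighbour $u$ in $F$, so $F$ has at most two components. With $n(F)=n-3$ and $\alpha(F)=\alpha-1$, the forest form of the induction hypothesis then gives $\sharp\alpha(F)\le\varphi^{\,2(n-3)-3(\alpha-1)+2}=\varphi^{g(T)-2}$, whence $\sharp\alpha(T)\le\varphi^{g(T)-1}+\varphi^{g(T)-2}=\varphi^{g(T)}$, completing the induction. It remains only to check the routine arithmetic in the base cases and in the two reductions, and to note that every auxiliary graph that appears is a subcubic forest whose components have fewer vertices than $T$, so that the induction hypothesis genuinely applies.
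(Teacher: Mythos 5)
Your proof is correct, and it takes a genuinely different and noticeably shorter route than the paper's. The paper insists on staying inside the class of trees: it takes a longest path $uvwx\ldots r$, proves a chain of structural claims ($d_T(v)=d_T(w)=d_T(x)=2$, neither $w$ nor $x$ is a support vertex, no child of $x$ is a support vertex), and only then performs a two-term decomposition $\sharp\alpha(T)\leq\sharp\alpha(T')+\sharp\alpha(T'')$ into two smaller \emph{trees} (via an auxiliary tree $T_1$ obtained by an edge swap), closing with $\varphi^{-1}+\varphi^{-2}=1$ for $\varphi=\frac{1+\sqrt{5}}{2}$. Your strengthening of the induction hypothesis to subcubic forests, at a cost of one factor $\varphi$ per component, lets you delete $N_T[u]=\{u,v,w\}$ outright; subcubicity caps the number of resulting components at two, and the exponent bookkeeping $2(n-3)-3(\alpha-1)+2=(2n-3\alpha+1)-2$ together with $2(n-2)-3(\alpha-1)+1=(2n-3\alpha+1)-1$ feeds into exactly the same identity $\varphi^{-1}+\varphi^{-2}=1$. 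This collapses the paper's Claims 2--8 and the final computation into a single two-case analysis at a deepest leaf; I verified the bijections you invoke (maximum independent sets of $T$ containing $v$ correspond exactly to those of the tree $T-\{u,v\}$, which has independence number $\alpha-1$; those avoiding $v$ inject into the independent sets of size $\alpha-1$ of $F=T-\{u,v,w\}$, which are either absent or are precisely the maximum independent sets of $F$ and hence factor over its at most two components), and the case of two leaf children matches the paper's Claim 3. The one thing the paper's connected-only approach buys is that it never needs a forest version of the statement; what yours buys is brevity and a cleaner explanation of where the exponent $2n-3\alpha+1$ and the subcubic hypothesis actually enter. The remaining verifications you defer (base cases $n\leq 2$, connectivity and subcubicity of the auxiliary graphs, the degree count showing $F$ has at most $d_T(w)-1\leq 2$ components) are indeed routine.
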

All proofs are give in the next section.

\section{Proofs}

\begin{proof}[Proof of Theorem \ref{theoremzykov}]
Let $G$ be a graph of order $n$ with no clique of order $p$ 
that maximizes $\sharp\omega^{(q)}(G)$. 
Let $G_0$ arise from $G$ by removing all edges 
that do not belong to a clique of order $q$ in $G$.
Clearly, $G_0$ has no clique of order $p$, 
and $\sharp\omega^{(q)}(G_0)=\sharp\omega^{(q)}(G)$.

\begin{claim}\label{claim1}
$G_0$ is a complete multipartite graph.
\end{claim}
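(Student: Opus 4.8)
The plan is to prove Claim~\ref{claim1} by a vertex-duplication argument in the spirit of the quoted \emph{Book} proof, exploiting the one feature that distinguishes $G_0$ from an arbitrary maximizer: \emph{every edge of $G_0$ lies in a clique of order $q$}. First I would record that, since $G_0$ and $G$ have exactly the same cliques of order $q$, the graph $G_0$ is $K_p$-free, has order $n$, and still maximizes $\sharp\omega^{(q)}$ over all $K_p$-free graphs of order $n$. For a vertex $v$ write $d^{*}(v)$ for the number of $q$-cliques of $G_0$ containing $v$, and for an edge $uv$ write $d^{*}(uv)$ for the number of $q$-cliques of $G_0$ containing both $u$ and $v$; the defining property of $G_0$ is precisely that $d^{*}(uv)\ge 1$ for every edge $uv$ of $G_0$. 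Assume for contradiction that $G_0$ is not complete multipartite. Then non-adjacency in $G_0$ is not transitive, so there are vertices $a,b,c$ with $ab,bc\notin E(G_0)$ and $ac\in E(G_0)$.

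The basic move is: for a vertex $v$ and a non-neighbour $u$ of $v$, let $G'$ be obtained from $G_0$ by deleting $u$ and adding a new vertex $v'$ with $N_{G'}(v')=N_{G_0}(v)$ and $v'\not\sim v$ (a ``twin'' of $v$). Then $G'$ has order $n$; it is $K_p$-free because any clique of $G'$ uses at most one of $v,v'$ and, after renaming $v'$ as $v$, becomes a clique of $G_0$ of the same size; and a short count of which $q$-cliques are destroyed (those through $u$) and created (those through $v'$, which biject with the $q$-cliques through $v$) shows
\[
\sharp\omega^{(q)}(G')=\sharp\omega^{(q)}(G_0)-d^{*}(u)+d^{*}(v).
\]
By maximality of $\sharp\omega^{(q)}(G_0)$ this gives $d^{*}(v)\le d^{*}(u)$ whenever $uv\notin E(G_0)$; applying it to $(v,u)=(a,b)$ and to $(v,u)=(c,b)$ yields $d^{*}(a)\le d^{*}(b)$ and $d^{*}(c)\le d^{*}(b)$.

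Now I would iterate the move twice on $b$: let $G''$ arise from $G_0$ by deleting both $a$ and $c$ and adding two non-adjacent twins $b_1,b_2$ of $b$. This is legitimate because $ab,bc\notin E(G_0)$, so $a,c\notin N_{G_0}(b)$ and the neighbourhoods of $b_1,b_2$ stay inside $V(G'')$; as before $G''$ has order $n$ and is $K_p$-free. Counting $q$-cliques — those avoiding $\{b_1,b_2\}$ live in $G_0-\{a,c\}$, and by inclusion--exclusion there are $\sharp\omega^{(q)}(G_0)-d^{*}(a)-d^{*}(c)+d^{*}(ac)$ of them (a $q$-clique meeting both $a$ and $c$ contains the edge $ac$, so there are $d^{*}(ac)$ such), while those through $b_1$, respectively $b_2$, each biject with the $q$-cliques of $G_0$ through $b$ — gives
\[
\sharp\omega^{(q)}(G'')=\sharp\omega^{(q)}(G_0)+\bigl(d^{*}(b)-d^{*}(a)\bigr)+\bigl(d^{*}(b)-d^{*}(c)\bigr)+d^{*}(ac).
\]
The first two bracketed terms are nonnegative by the previous paragraph, and $d^{*}(ac)\ge 1$ since $ac$ is an edge of $G_0$; hence $\sharp\omega^{(q)}(G'')>\sharp\omega^{(q)}(G_0)$, contradicting maximality. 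This proves the claim.

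I expect the only genuinely delicate point to be this last step: a single duplication yields merely the non-strict inequalities $d^{*}(a),d^{*}(c)\le d^{*}(b)$, so one must combine the double move with the extra fact $d^{*}(ac)\ge 1$ to force a strict gain — and that extra fact is exactly what passing from $G$ to $G_0$ was designed to supply. The remaining care is bookkeeping: verifying that each modified graph has order $n$ and is $K_p$-free, and that the $q$-cliques are counted with no omission or double count (in particular, that every $q$-clique of $G_0$ through $b$ automatically avoids $a$ and $c$, so that the $b_1$- and $b_2$-counts are disjoint from the count in $G_0-\{a,c\}$).
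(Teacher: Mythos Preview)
Your proof is correct and follows essentially the same route as the paper's: with your $b$ playing the role of the paper's $u$ and your $a,c$ the roles of $v,w$, both arguments first use a single duplication to force $d^{*}(a),d^{*}(c)\le d^{*}(b)$, and then replace the two endpoints of the edge $ac$ by two non-adjacent twins of $b$ (the paper phrases this as ``removing $v$ and $w$ and triplicating $u$''), gaining strictly because $ac$ lies in at least one $q$-clique of $G_0$. Your write-up is in fact a bit more explicit about the inclusion--exclusion term $d^{*}(ac)$, which the paper simply lower-bounds by~$1$.
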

\begin{proof}[Proof of Claim \ref{claim1}]
Suppose, for a contradiction, that the claim fails.
This implies the existence of three vertices $u$, $v$, and $w$
such that $u$ is not adjacent to $v$ or $w$, but $v$ and $w$ are adjacent.
Let $d^{(q)}(u)$ be the number of cliques of order $q$ in $G_0$ that contain $u$,
that is, $d^{(q)}(u)=\sharp\omega^{(q-1)}(G_0[N_{G_0}(u)])$.
Let $d^{(q)}(v)$ and $d^{(q)}(w)$ be defined analogously.
If $d^{(q)}(u)<d^{(q)}(v)$, then 
the graph that arises from $G_0$ by removing $u$ and duplicating $v$
has no clique of order $p$ but 
$\sharp\omega^{(q)}(G_0)-d^{(q)}(u)+d^{(q)}(v)>\sharp\omega^{(q)}(G)$ 
cliques of order $q$, contradicting the choice of $G$.
Hence, by symmetry, we may assume that $d^{(q)}(u)\geq d^{(q)}(v),d^{(q)}(w)$.
Now, since the edge $vw$ belongs to some clique of order $q$ in $G_0$,
the graph that arises from $G_0$ by removing $v$ and $w$, and triplicating $u$
has no clique of order $p$ but 
$\sharp\omega^{(q)}(G_0)+2d^{(q)}(u)-d^{(q)}(v)-d^{(q)}(w)+1>\sharp\omega^{(q)}(G)$ 
cliques of order $q$, contradicting the choice of $G$.
\end{proof}
Since $G_0$ has no clique of order $p$,
the multipartite graph $G_0$ has $p-1$ (possibly empty) partite sets $V_1,\ldots,V_{p-1}$,
of orders $n_1\geq \ldots\geq n_{p-1}$, respectively.
Since $\sharp\omega^{(q)}(G_0)>0$,
the graph $G_0'=G_0-(V_1\cup V_{p-1})$ has a clique of order $q-2$,
that is, $\sharp\omega^{(q-2)}(G_0')>0$.
If $n_1\geq n_{p-1}+2$, then $G_0$ has
$$
n_1n_{p-1}\sharp\omega^{(q-2)}(G_0')
+
(n_1+n_{p-1})\sharp\omega^{(q-1)}(G_0')
+
\sharp\omega^{(q)}(G_0')$$
cliques of order $q$,
while 
the graph that arises from $G_0$ by moving one vertex from $V_i$ to $V_j$
has 
$$
(n_1-1)(n_{p-1}+1)\sharp\omega^{(q-2)}(G_0')
+
(n_1-1+n_{p-1}+1)\sharp\omega^{(q-1)}(G_0')
+
\sharp\omega^{(q)}(G_0')$$
cliques of order $q$.
Since 
$\sharp\omega^{(q-2)}(G_0')>0$
and 
$(n_1-1)(n_{p-1}+1)>n_1n_{p-1}$,
this contradicts the choice of $G$.
Hence, we obtain $|n_i-n_j|\leq 1$ for every $1\leq i\leq j\leq p-1$,
which implies $G_0=T_{p-1}(n)$.
Since $n\geq p$, all $p-1$ partite sets of $G_0$ are non-empty.
Therefore, adding any non-edge of $G_0$ to $G_0$ 
results in a graph that has a clique of order $p$,
which implies $G=G_0$,
and completes the proof. 
\end{proof}
A vertex of degree at most $1$ is an {\it endvertex},
and a neighbor of an endvertex is a {\it support vertex}.

\begin{proof}[Proof of Theorem \ref{theorem1}]
Within this proof, we call a tree {\it special} if it arises by subdividing $n-\alpha-1$ edges of $K_{1,\alpha}$ once.
Suppose, for a contradiction, that the theorem is false,
and let $n$ be the smallest order for which it fails.
Let $T$ be a tree of order $n$ and independence number $\alpha$ 
such that 
\begin{itemize}
\item either $\sharp\alpha(T)$ does not satisfy (\ref{e4}),
\item or $\sharp\alpha(T)$ satisfies (\ref{e4}) with equality but $T$ is not special.
\end{itemize}
It is easy to see that $T$ is not special and has diameter at least $3$,
which implies $\frac{n}{2}\leq \alpha\leq n-2$.
We root $T$ at an endvertex of a longest path in $T$.
Let $y$ be the parent of an endvertex of maximum depth in $T$, 
let $x_1,\ldots,x_k$ be the children of $y$, and
let $z$ be the parent of $y$.

The tree $T'=T-\{ x_1,\ldots,x_k,y\}$ has order $n'=n-k-1$ and independence number $\alpha'=\alpha-k$.

First, we assume that $k\geq 2$.
In this case, every maximum independent set in $T$ contains $\{ x_1,\ldots,x_k\}$, 
and the choice of $n$ implies
\begin{eqnarray}
\sharp\alpha(T) & = & \sharp\alpha(T')\nonumber\\
& \stackrel{(\ref{e4})}{\leq} & 2^{n'-\alpha'-1}+1\label{e5}\\
& = & 2^{n-\alpha-2}+1\nonumber\\
& \stackrel{\alpha\leq n-2}{\leq} & 2^{n-\alpha-1}\label{e6}.
\end{eqnarray}
Now, if $\sharp\alpha(T)=2^{n-\alpha-1}$,
then 
\begin{itemize}
\item equality holds in (\ref{e5}), which implies $2(\alpha-k)=2\alpha'=n'=n-k-1$,
and 
\item equality holds in (\ref{e6}), which implies $\alpha=n-2$.
\end{itemize}
These equations imply $k=n-3$, $\alpha'=1$, and $n'=2$, that is, $T'$ is $K_2$.
We obtain the contradiction, that $T$ arises by sudvidiving one edge of $K_{1,\alpha}$,
that is, $T$ is special.
Hence, we may assume that $k=1$.

Since 
the number of maximum independent sets in $T$ that contain $y$
is less or equal than 
the number of maximum independent sets in $T$ that contain $x$,
we obtain $\sharp\alpha(T) \leq 2\sharp\alpha(T')$,
and $\sharp\alpha(T)<2\sharp\alpha(T')$ if some maximum independent set in $T'$ that contain $z$.

First, we assume that $2\alpha=n$ and that $T'$ is not special.
Since $2\alpha'=2\alpha-2=n-2=n'$,
the tree $T'$ is a bipartite graph whose partite sets both have order exactly $\alpha'$.
This implies that some maximum independent set in $T'$ contains $z$,
and the choice of $n$ implies the contradiction
\begin{eqnarray*}
\sharp\alpha(T) & < & 2\sharp\alpha(T')
\stackrel{(\ref{e4})}{\leq} 2\cdot 2^{n'-\alpha'-1}
=2^{n-\alpha-1}.
\end{eqnarray*}
Next, we assume that $2\alpha=n$ and that $T'$ is special.
There are only three possibilities for the structure of $T$
illustrated in Figure \ref{fig:3cases} 
together with the resulting values of $\sharp\alpha$.

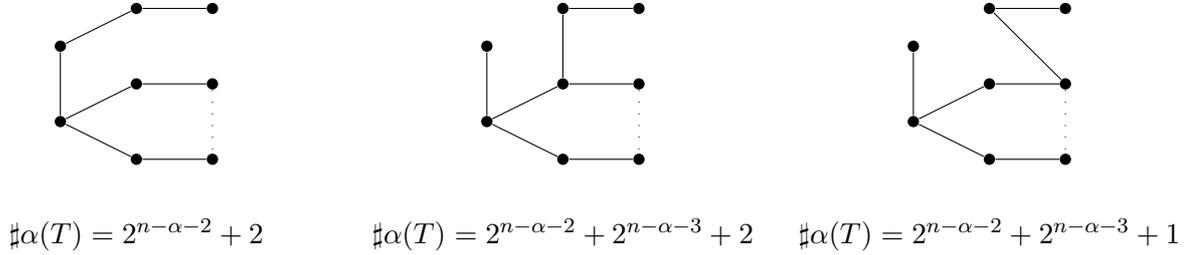
\begin{figure} [H]
\begin{minipage}[t]{0.33\textwidth}
\begin{center}
\begin{tikzpicture}[auto]

 \node[fill, circle, inner sep=1.5pt] (v1) at (0,0.5) {};
 \node[fill, circle, inner sep=1.5pt] (v2) at (1,0) {};
 \node[fill, circle, inner sep=1.5pt] (v3) at (2,0) {};
 \node[fill, circle, inner sep=1.5pt] (v4) at (1,2) {};
 \node[fill, circle, inner sep=1.5pt] (v5) at (1,1) {};
 \node[fill, circle, inner sep=1.5pt] (v6) at (2,1) {};
 \node[fill, circle, inner sep=1.5pt] (v7) at (0,1.5) {};
 \node[fill, circle, inner sep=1.5pt] (v8) at (2,2) {};
 
\draw (v5) -- (v1);
\draw (v1) -- (v2);
\draw (v1) -- (v7);
\draw (v5) -- (v6);
\draw (v2) -- (v3);
\draw (v7) -- (v4);
\draw (v4) -- (v8);
\draw[loosely dotted] (v6)--(v3);

\node[align=left] at (1,-1) {$\sharp\alpha(T)=2^{n-\alpha-2}+2$};

\end{tikzpicture}
\end{center}

\end{minipage}\begin{minipage}[t]{0.33\textwidth}
\begin{center}
\begin{tikzpicture}[auto]

 \node[fill, circle, inner sep=1.5pt] (v1) at (0,0.5) {};
 \node[fill, circle, inner sep=1.5pt] (v2) at (1,0) {};
 \node[fill, circle, inner sep=1.5pt] (v3) at (2,0) {};
 \node[fill, circle, inner sep=1.5pt] (v4) at (1,2) {};
 \node[fill, circle, inner sep=1.5pt] (v5) at (1,1) {};
 \node[fill, circle, inner sep=1.5pt] (v6) at (2,1) {};
 \node[fill, circle, inner sep=1.5pt] (v7) at (0,1.5) {};
 \node[fill, circle, inner sep=1.5pt] (v8) at (2,2) {};
 
\draw (v5) -- (v1);
\draw (v1) -- (v2);
\draw (v1) -- (v7);
\draw (v5) -- (v6);
\draw (v2) -- (v3);
\draw (v5) -- (v4);
\draw (v4) -- (v8);
\draw[loosely dotted] (v6)--(v3);

\node[align=left] at (1,-1) {$\sharp\alpha(T)=2^{n-\alpha-2}+2^{n-\alpha-3}+2$};
\end{tikzpicture}
\end{center}
\end{minipage}\begin{minipage}[t]{0.33\textwidth}
\begin{center}
\begin{tikzpicture}[auto]

 \node[fill, circle, inner sep=1.5pt] (v1) at (0,0.5) {};
 \node[fill, circle, inner sep=1.5pt] (v2) at (1,0) {};
 \node[fill, circle, inner sep=1.5pt] (v3) at (2,0) {};
 \node[fill, circle, inner sep=1.5pt] (v4) at (1,2) {};
 \node[fill, circle, inner sep=1.5pt] (v5) at (1,1) {};
 \node[fill, circle, inner sep=1.5pt] (v6) at (2,1) {};
 \node[fill, circle, inner sep=1.5pt] (v7) at (0,1.5) {};
 \node[fill, circle, inner sep=1.5pt] (v8) at (2,2) {};
 
\draw (v5) -- (v1);
\draw (v1) -- (v2);
\draw (v1) -- (v7);
\draw (v5) -- (v6);
\draw (v2) -- (v3);
\draw (v6) -- (v4);
\draw (v4) -- (v8);
\draw[loosely dotted] (v6)--(v3);

\node[align=left] at (1,-1) {$\sharp\alpha(T)=2^{n-\alpha-2}+2^{n-\alpha-3}+1$};

\end{tikzpicture}
\end{center}
\end{minipage}
\captionof{figure}{Three possibilities for the structure of $T$.}
\label{fig:3cases}
\end{figure}
In all three cases, we have $n-\alpha-2\geq 1$,
because otherwise either $T$ would be special or 
the configuration would not be possible. 
In the first and third case,
this already implies a contradiction, because
$2^{n-\alpha -2}+2\leq 2^{n-\alpha -2}+2^{n-\alpha -3}+1 \leq2^{n-\alpha -1}.$
In the second case, 
we obtain $n-\alpha-2\geq 2$, because $T$ is not special. 
Thus, also in this case, we obtain a contradiction, because
$2^{n-\alpha -2}+2^{n-\alpha -3}+2 \leq2^{n-\alpha -1}.$

Finally, we assume that $2\alpha>n$.
Since $2\alpha'>n'$, the choice of $n$ implies
\begin{eqnarray}
\sharp\alpha(T) & \leq & 2\sharp\alpha(T')\label{e7}\\
&\stackrel{(\ref{e4})}{\leq} & 2\cdot 2^{n'-\alpha'-1}\label{e8}\\
&=&2^{n-\alpha-1}.\nonumber
\end{eqnarray}
Now, if $\sharp\alpha(T)=2^{n-\alpha-1}$,
then 
\begin{itemize}
\item equality holds in (\ref{e7}), which implies that no maximum independent set in $T'$ contains $z$,
and 
\item equality holds in (\ref{e8}), which implies that $T'$ is special.
\end{itemize}
Since the only vertex of $T'$ that does not belong to some maximum independent set in $T'$
is the unique vertex of degree more than $2$ in $T'$,
we obtain the contradiction that $T$ is special,
which completes the proof.
\end{proof}

\begin{proof}[Proof of Theorem \ref{theorem2}]
Suppose, for a contradiction, that the theorem is false,
and let $n$ be the smallest order for which it fails.
Let $T$ be a subcubic tree of order $n$ and independence number $\alpha=\frac{n}{2}$
such that $\sharp\alpha(T)$ is as large as possible.
Note that $n$ is necessarily even.

If $A$ and $B$ are the two partite sets of the bipartite graph $T$,
then $\alpha=\frac{n}{2}$ implies $|A|=|B|=\frac{n}{2}$.
Furthermore, since $A$ and $B$ are both maximum independent sets in $T$,
the neighborhood $N_T(S)$ of every subset $S$ of $A$ is at least as large as $S$,
which, by Hall's theorem \cite{ha}, implies that $T$ has a perfect matching $M$.
If $n\in \{ 2,4\}$, then $T=T(\alpha)$ follows immediately.
Hence, we may assume that $n\geq 6$.

Let the tree $\tilde{T}$ arise from $T$ by contracting all edges in $M$.
Let $e_1\ldots e_p$ be a longest path in $\tilde{T}$.
Since $n\geq 6$, we have $p\geq 3$.
Let $e_i=u_iv_i$ for $i\in [3]$.
By symmetry, we may assume that 
$u_2u_3$ is the (unique) edge between $e_2$ and $e_3$.
By the choice of $P$, all neighbors of $e_2$ in $\tilde{T}$ that are distinct from $e_3$ are endvertices of $\tilde{T}$.
Since $T$ has maximum degree at most $3$,
the set $N_{\tilde{T}}(e_2)\setminus \{ e_3\}$
contains 
\begin{itemize}
\item $d_1\leq 1$ edges $e$ of $T$ such that $u_2$ has a neighbor in $e$,
and 
\item $d_2\leq 2$ edges $e$ of $T$ such that $v_2$ has a neighbor in $e$.
\end{itemize}
Since $e_1$ is one of the edges counted by $d_1+d_2$,
we obtain
$$(d_1,d_2)\in \{ (0,1),(0,2),(1,1),(1,2),(1,0)\}.$$

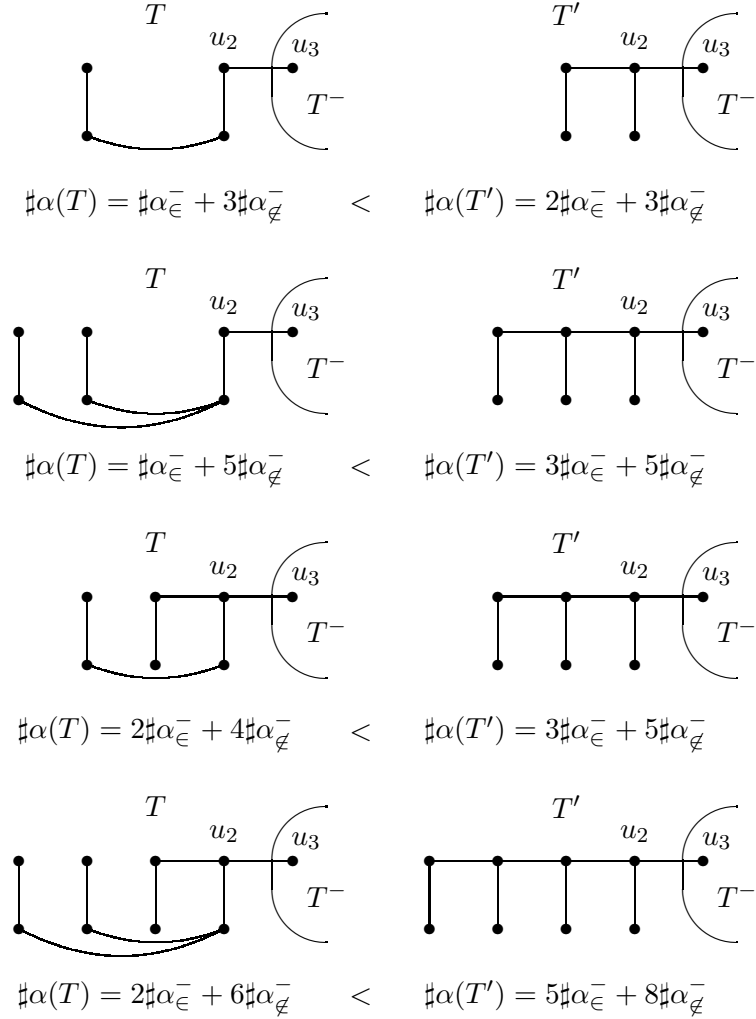
\begin{figure}[H]
\begin{center}
\unitlength 0.9mm 
\linethickness{0.4pt}
\ifx\plotpoint\undefined\newsavebox{\plotpoint}\fi 
\begin{picture}(118,33)(0,0)
\put(15,15){\circle*{1.5}}
\put(85,15){\circle*{1.5}}
\put(35,15){\circle*{1.5}}
\put(95,15){\circle*{1.5}}
\put(15,25){\circle*{1.5}}
\put(85,25){\circle*{1.5}}
\put(35,25){\circle*{1.5}}
\put(95,25){\circle*{1.5}}
\put(45,25){\circle*{1.5}}
\put(105,25){\circle*{1.5}}
\put(15,25){\line(0,-1){10}}
\put(85,25){\line(0,-1){10}}
\put(35,25){\line(0,-1){10}}
\put(95,25){\line(0,-1){10}}
\put(47,28){\makebox(0,0)[cc]{$u_3$}}
\put(107,28){\makebox(0,0)[cc]{$u_3$}}
\put(35,29){\makebox(0,0)[cc]{$u_2$}}
\put(95,29){\makebox(0,0)[cc]{$u_2$}}
\put(50,23){\oval(16,20)[l]}
\put(110,23){\oval(16,20)[l]}
\put(50,20){\makebox(0,0)[cc]{$T^-$}}
\put(110,20){\makebox(0,0)[cc]{$T^-$}}
\put(45,25){\line(-1,0){10}}
\put(105,25){\line(-1,0){10}}
\put(95,25){\line(-1,0){10}}
\qbezier(35,15)(25,11)(15,15)
\put(25,5){\makebox(0,0)[cc]
{$\sharp\alpha(T)=\sharp\alpha^-_\in+3\sharp\alpha^-_{\not\in}$}}
\put(85,5){\makebox(0,0)[cc]
{$\sharp\alpha(T')=2\sharp\alpha^-_\in+3\sharp\alpha^-_{\not\in}$}}
\put(55,5){\makebox(0,0)[cc]{$<$}}
\put(25,33){\makebox(0,0)[cc]{$T$}}
\put(85,33){\makebox(0,0)[cc]{$T'$}}
\end{picture}\\[5mm]
\linethickness{0.4pt}
\ifx\plotpoint\undefined\newsavebox{\plotpoint}\fi 
\begin{picture}(118,33)(0,0)
\put(5,15){\circle*{1.5}}
\put(15,15){\circle*{1.5}}
\put(75,15){\circle*{1.5}}
\put(85,15){\circle*{1.5}}
\put(35,15){\circle*{1.5}}
\put(95,15){\circle*{1.5}}
\put(5,25){\circle*{1.5}}
\put(15,25){\circle*{1.5}}
\put(75,25){\circle*{1.5}}
\put(85,25){\circle*{1.5}}
\put(35,25){\circle*{1.5}}
\put(95,25){\circle*{1.5}}
\put(45,25){\circle*{1.5}}
\put(105,25){\circle*{1.5}}
\put(5,25){\line(0,-1){10}}
\put(15,25){\line(0,-1){10}}
\put(75,25){\line(0,-1){10}}
\put(85,25){\line(0,-1){10}}
\put(35,25){\line(0,-1){10}}
\put(95,25){\line(0,-1){10}}
\put(47,28){\makebox(0,0)[cc]{$u_3$}}
\put(107,28){\makebox(0,0)[cc]{$u_3$}}
\put(35,29){\makebox(0,0)[cc]{$u_2$}}
\put(95,29){\makebox(0,0)[cc]{$u_2$}}
\put(50,23){\oval(16,20)[l]}
\put(110,23){\oval(16,20)[l]}
\put(50,20){\makebox(0,0)[cc]{$T^-$}}
\put(110,20){\makebox(0,0)[cc]{$T^-$}}
\put(45,25){\line(-1,0){10}}
\put(105,25){\line(-1,0){10}}
\put(95,25){\line(-1,0){10}}
\qbezier(35,15)(25,11)(15,15)
\qbezier(5,15)(20,7)(35,15)
\put(25,5){\makebox(0,0)[cc]
{$\sharp\alpha(T)=\sharp\alpha^-_\in+5\sharp\alpha^-_{\not\in}$}}
\put(85,5){\makebox(0,0)[cc]
{$\sharp\alpha(T')=3\sharp\alpha^-_\in+5\sharp\alpha^-_{\not\in}$}}
\put(55,5){\makebox(0,0)[cc]{$<$}}
\put(25,33){\makebox(0,0)[cc]{$T$}}
\put(85,33){\makebox(0,0)[cc]{$T'$}}
\put(75,25){\line(1,0){10}}
\end{picture}\\[5mm]
\linethickness{0.4pt}
\ifx\plotpoint\undefined\newsavebox{\plotpoint}\fi 
\begin{picture}(118,33)(0,0)
\put(15,15){\circle*{1.5}}
\put(75,15){\circle*{1.5}}
\put(25,15){\circle*{1.5}}
\put(85,15){\circle*{1.5}}
\put(35,15){\circle*{1.5}}
\put(95,15){\circle*{1.5}}
\put(15,25){\circle*{1.5}}
\put(75,25){\circle*{1.5}}
\put(25,25){\circle*{1.5}}
\put(85,25){\circle*{1.5}}
\put(35,25){\circle*{1.5}}
\put(95,25){\circle*{1.5}}
\put(45,25){\circle*{1.5}}
\put(105,25){\circle*{1.5}}
\put(15,25){\line(0,-1){10}}
\put(75,25){\line(0,-1){10}}
\put(25,25){\line(0,-1){10}}
\put(85,25){\line(0,-1){10}}
\put(35,25){\line(0,-1){10}}
\put(95,25){\line(0,-1){10}}
\put(47,28){\makebox(0,0)[cc]{$u_3$}}
\put(107,28){\makebox(0,0)[cc]{$u_3$}}
\put(35,29){\makebox(0,0)[cc]{$u_2$}}
\put(95,29){\makebox(0,0)[cc]{$u_2$}}
\put(50,23){\oval(16,20)[l]}
\put(110,23){\oval(16,20)[l]}
\put(50,20){\makebox(0,0)[cc]{$T^-$}}
\put(110,20){\makebox(0,0)[cc]{$T^-$}}
\put(45,25){\line(-1,0){10}}
\put(105,25){\line(-1,0){10}}
\put(35,25){\line(-1,0){10}}
\put(95,25){\line(-1,0){10}}
\qbezier(35,15)(25,11)(15,15)
\put(25,5){\makebox(0,0)[cc]
{$\sharp\alpha(T)=2\sharp\alpha^-_\in+4\sharp\alpha^-_{\not\in}$}}
\put(85,5){\makebox(0,0)[cc]
{$\sharp\alpha(T')=3\sharp\alpha^-_\in+5\sharp\alpha^-_{\not\in}$}}
\put(55,5){\makebox(0,0)[cc]{$<$}}
\put(25,33){\makebox(0,0)[cc]{$T$}}
\put(85,33){\makebox(0,0)[cc]{$T'$}}
\put(75,25){\line(1,0){10}}
\end{picture}\\[5mm]
\linethickness{0.4pt}
\ifx\plotpoint\undefined\newsavebox{\plotpoint}\fi 
\begin{picture}(118,33)(0,0)
\put(5,15){\circle*{1.5}}
\put(65,15){\circle*{1.5}}
\put(15,15){\circle*{1.5}}
\put(75,15){\circle*{1.5}}
\put(25,15){\circle*{1.5}}
\put(85,15){\circle*{1.5}}
\put(35,15){\circle*{1.5}}
\put(95,15){\circle*{1.5}}
\put(5,25){\circle*{1.5}}
\put(65,25){\circle*{1.5}}
\put(15,25){\circle*{1.5}}
\put(75,25){\circle*{1.5}}
\put(25,25){\circle*{1.5}}
\put(85,25){\circle*{1.5}}
\put(35,25){\circle*{1.5}}
\put(95,25){\circle*{1.5}}
\put(45,25){\circle*{1.5}}
\put(105,25){\circle*{1.5}}
\put(5,25){\line(0,-1){10}}
\put(65,25){\line(0,-1){10}}
\put(15,25){\line(0,-1){10}}
\put(75,25){\line(0,-1){10}}
\put(25,25){\line(0,-1){10}}
\put(85,25){\line(0,-1){10}}
\put(35,25){\line(0,-1){10}}
\put(95,25){\line(0,-1){10}}
\put(47,28){\makebox(0,0)[cc]{$u_3$}}
\put(107,28){\makebox(0,0)[cc]{$u_3$}}
\put(35,29){\makebox(0,0)[cc]{$u_2$}}
\put(95,29){\makebox(0,0)[cc]{$u_2$}}
\put(50,23){\oval(16,20)[l]}
\put(110,23){\oval(16,20)[l]}
\put(50,20){\makebox(0,0)[cc]{$T^-$}}
\put(110,20){\makebox(0,0)[cc]{$T^-$}}
\put(45,25){\line(-1,0){10}}
\put(105,25){\line(-1,0){10}}
\put(35,25){\line(-1,0){10}}
\put(95,25){\line(-1,0){10}}
\qbezier(35,15)(25,11)(15,15)
\qbezier(5,15)(20,7)(35,15)
\put(85,25){\line(-1,0){20}}
\put(25,5){\makebox(0,0)[cc]
{$\sharp\alpha(T)=2\sharp\alpha^-_\in+6\sharp\alpha^-_{\not\in}$}}
\put(85,5){\makebox(0,0)[cc]
{$\sharp\alpha(T')=5\sharp\alpha^-_\in+8\sharp\alpha^-_{\not\in}$}}
\put(55,5){\makebox(0,0)[cc]{$<$}}
\put(25,33){\makebox(0,0)[cc]{$T$}}
\put(85,33){\makebox(0,0)[cc]{$T'$}}
\end{picture}
\end{center}
\caption{$(d_1,d_2)\not\in \{ (0,1),(0,2),(1,1),(1,2)\}.$}\label{figexcl}
\end{figure}
Our next goal is to exclude the first four of these possible values of $(d_1,d_2)$.
In each case, we construct a subcubic tree $T'$ of order $n$
and independence number $\alpha=\frac{n}{2}$
such that $\sharp\alpha(T')>\sharp\alpha(T)$,
contradicting the choice of $T$.
Let $T^-=T-\bigcup_{e\in N_{\tilde{T}}(e_2)\setminus \{ e_3\}}e$.
By construction, the tree $T^-$ still has a perfect matching,
which implies $\alpha(T^-)=\frac{n(T^-)}{2}$.

Let 
\begin{itemize}
\item $\sharp\alpha^-_\in$ be the number of maximum independent sets in $T^-$ 
that contain $u_3$, and let 
\item $\sharp\alpha^-_{\not\in}$ be the number of maximum independent sets in $T^-$ 
that do not contain $u_3$.
\end{itemize}
Since $\alpha(T^-)=\frac{n(T^-)}{2}$, 
arguing as above implies that both partite sets of the bipartite graph $T^-$ 
are maximum independent sets in $T^-$ ,
which implies $\sharp\alpha^-_\in, \sharp\alpha^-_{\not\in}>0$.
Figure \ref{figexcl} illustrates the construction of $T'$ in each case,
together with the values of $\sharp\alpha(T)$ and $\sharp\alpha(T')$.

We conclude that $(d_1,d_2)=(1,0)$,
which implies that the subcubic tree $T'$ 
has order $n-4$ and independence number
$\alpha-2=\frac{n-4}{2}$.
Let $T''=T-\{ u_1,v_1\}$.
The subcubic tree $T''$ 
has order $n-2$ and independence number
$\alpha-1=\frac{n-2}{2}$.
Therefore, by the choice of $n$, we obtain
\begin{eqnarray}
\sharp\alpha(T) & = & 2\sharp\alpha^-_\in+3\sharp\alpha^-_{\not\in}\nonumber\\
& = & 
\Big(\sharp\alpha^-_\in+2\sharp\alpha^-_{\not\in}\Big)+
\Big(\sharp\alpha^-_\in+\sharp\alpha^-_{\not\in}\Big)\nonumber\\
& = & 
\sharp\alpha(T'')+\sharp\alpha(T')\nonumber\\
&\leq & 
f(\alpha-1+2)+f(\alpha-2+2)\label{e10}\\
&=& 
f(\alpha+2),\nonumber
\end{eqnarray}
that is, $\sharp\alpha(T)\leq f(\alpha+2)$.
Furthermore,
if $\sharp\alpha(T)=f(\alpha+2)$,
then equality holds in (\ref{e10}),
which, by the choice of $n$, implies 
$T'=T(\alpha-2)$ and $T''=T(\alpha-1)$,
and, hence, $T=T(\alpha)$.
This contradiction completes the proof. 
\end{proof}

\begin{proof}[Proof of Theorem \ref{theorem3}]
Suppose, for a contradiction, that the theorem is false,
and let $n$ be the smallest order for which it fails.
Let $T$ be a subcubic tree of order $n$
and independence number $\alpha$.
Let $u$ be an endvertex of a longest path $P$ in $T$.
By the choice of $n$, the path $P$ has order at least $3$.
Let $v$ be the neighbor of $u$,
and let $w$ be the neighbor of $v$ on $P$ that is distinct from $u$.
The subcubic tree $T'=T-(N_T[v]\setminus \{ w\})$
has order $n-d_T(v)$
and independence number $\alpha-(d_T(v)-1)$.
By the choice of $n$, we obtain
\begin{eqnarray}
\alpha & = & \alpha(T')+(d_T(v)-1)\nonumber\\
& \leq & \frac{2n(T')+1}{3}+(d_T(v)-1)\label{e12}\\
& = & \frac{2(n-d_T(v))+1}{3}+(d_T(v)-1)\nonumber\\
& = & \frac{2n+1}{3}-\frac{3-d_T(v)}{3}\nonumber\\
& \leq & \frac{2n+1}{3}\label{e13},
\end{eqnarray}
which implies (\ref{e11}).
Now, equality in (\ref{e11}) implies 
equality in (\ref{e12}) and (\ref{e13}).
By the choice of $n$,
the tree $T'$ arises from $K_1$ by iteratively attaching $P_3$s,
and that $v$ has degree $3$.
Hence, also $T$ arises from $K_1$ by iteratively attaching $P_3$s.
The uniqueness of the maximum independent set
follows easily by an inductive argument 
exploiting the constructive characterization of $T$.
This completes the proof. 
\end{proof}

\begin{proof}[Proof of Theorem \ref{theorem4}]
Suppose, for a contradiction, that the theorem is false, and let $n$ be the smallest order for which it fails. Let $T$ be a subcubic tree of order $n$ and independence number $\alpha$ such that $\sharp\alpha(T)$ is as large as possible.

\setcounter{claim}{0}

\begin{claim}\label{claim2}
The tree $T$ contains a path of length at least $3$.
\end{claim}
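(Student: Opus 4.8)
The plan is to treat Claim~\ref{claim2} as the base-case bookkeeping for the reduction that will drive the rest of the proof of Theorem~\ref{theorem4}. After this claim, I would root $T$ at an endvertex of a longest path, pick a deepest leaf $x$ with neighbour $v$, and argue by minimality of $n$ after deleting $x$ (and, separately, $\{x,v\}$); that reduction stays inside the class of \emph{trees} only once one knows that the parent of a deepest leaf has degree exactly $2$, which in turn requires $T$ to contain a path of length at least $3$. So the role of Claim~\ref{claim2} is just to eliminate the subcubic trees that are too small for this machinery.

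For the claim itself, set $\varphi=\frac{1+\sqrt5}{2}$, so that $\varphi^2=\varphi+1$, and suppose for a contradiction that $T$ contains no path of length at least $3$. Since $T$ is connected, this forces $T$ to have diameter at most $2$, hence $T$ is $K_1$, $K_2$, or a star $K_{1,m}$ with $m\geq 2$; since $T$ is subcubic, $m\leq 3$. It therefore suffices to verify the bound $\sharp\alpha(T)\leq\varphi^{\,2n-3\alpha+1}$ in the four cases $T\in\{K_1,K_2,K_{1,2},K_{1,3}\}$: for $K_1$ one has $(n,\alpha,\sharp\alpha(T))=(1,1,1)$ and $\varphi^{2-3+1}=1$; for $K_2$ one has $(2,1,2)$ and $\varphi^{4-3+1}=\varphi^2=\varphi+1>2$; for $K_{1,2}$ one has $(3,2,1)$ and $\varphi^{6-6+1}=\varphi>1$; and for $K_{1,3}$ one has $(4,3,1)$ and $\varphi^{8-9+1}=1$. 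In each case the inequality of Theorem~\ref{theorem4} holds, contradicting the choice of $T$ as a counterexample, so $T$ must contain a path of length at least $3$.

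I expect essentially no obstacle here: the only points needing a moment's care are that a tree with no path of length $3$ is a star (a one-line diameter argument), that subcubicity bounds the number of leaves of that star by $3$ so only finitely many graphs arise, and that $\sharp\alpha$ is read off correctly, namely $\sharp\alpha(K_{1,m})=1$ for $m\geq 2$ because the set of all $m$ leaves is the unique maximum independent set, while $\sharp\alpha(K_2)=2$. The genuine difficulty of Theorem~\ref{theorem4} lies after this claim: showing that the support vertex $v$ of a deepest leaf has degree $2$ (if not, $v$ has two leaf children, and deleting $v$ with them changes neither $\sharp\alpha(T)$ nor the exponent $2n-3\alpha+1$, contradicting minimality), and then splitting the maximum independent sets of $T$ according to whether they contain the deepest leaf $x$, which — after a small case distinction on whether $\alpha(T-x)=\alpha$ — should give $\sharp\alpha(T)\leq\sharp\alpha(T-\{x,v\})+\sharp\alpha(T-x)$ with the exponents of the two summands dropping by $1$ and by $2$, so that the identity $\varphi^2=\varphi+1$ closes the induction.
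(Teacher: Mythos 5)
Your proof of the claim is correct and essentially identical to the paper's: both reduce to the finitely many subcubic trees without a path of length $3$, namely the stars $K_1$, $K_2$, $K_{1,2}$, $K_{1,3}$, and verify the bound $\sharp\alpha(T)\leq\left(\frac{1+\sqrt{5}}{2}\right)^{2n-3\alpha+1}$ numerically in each of the four cases, contradicting the choice of $T$ as a counterexample. The only cosmetic difference is that you make explicit the diameter argument showing that such a tree must be a star, which the paper leaves implicit.
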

\begin{proof}[Proof of Claim \ref{claim2}]
Suppose, for a contradiction, that $T$ is a star $K_{1,n-1}$.

If $n=1$, then 
$$\sharp\alpha(T)=1=\left(\frac{1+\sqrt{5}}{2}\right)^{2-3+1},$$
if $n=2$, then 
$$\sharp\alpha(T)=2<2.618\approx\left(\frac{1+\sqrt{5}}{2}\right)^{4-3+1},$$
if $n=3$, then
$$\sharp\alpha(T)=1<1.618\approx\left(\frac{1+\sqrt{5}}{2}\right)^{6-6+1},$$
and, if $n=4$, then
$$\sharp\alpha(T)=1=\left(\frac{1+\sqrt{5}}{2}\right)^{8-9+1}.$$
In each case, we obtain a contradiction to the choice of $n$ and $T$.
\end{proof}
Let $uvwx\ldots r$ be a longest path in $T$, and consider $T$ as rooted in $r$. 
For a vertex $z$ of $T$, 
let $V_z$ be the set that contains $z$ and all its descendants.

\begin{claim}\label{claim3}
$d_T(v)=2$
\end{claim}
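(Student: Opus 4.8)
The plan is to argue by contradiction, assuming $d_T(v)=3$ and contradicting the minimality of $n$. Since $uvwx\ldots r$ is a longest path in $T$ and $T$ is rooted at $r$, every child of $v$ must be an endvertex: if some child $c\neq u$ of $v$ had a child $c'$, then the path from $c'$ to $r$ would be longer than the path from $u$ to $r$, contradicting the choice of the longest path. Hence, if $d_T(v)=3$, then $v$ has exactly two children $u$ and $u'$, both endvertices, together with its parent $w$, so $V_v=\{u,u',v\}$.

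First I would pass to the subcubic tree $T'=T-\{u,u',v\}$, which is obtained from $T$ by deleting the pendant subtree $V_v$ and is therefore connected; by Claim~\ref{claim2} we have $n\geq 4$, so $T'$ is a nonempty tree of order $n'=n-3$. Next I would determine $\alpha(T')$ and relate the maximum independent sets of $T$ and $T'$ by a short case distinction on whether a maximum independent set $I$ of $T$ contains $v$: if $v\in I$, then replacing $v$ by the two endvertices $u,u'$ (which are adjacent only to $v$) strictly enlarges $I$, a contradiction, so $v\notin I$; moreover each of $u,u'$ must then lie in $I$, for otherwise $I$ could again be enlarged. Consequently every maximum independent set of $T$ is the union of $\{u,u'\}$ with a maximum independent set of $T'$, and conversely every maximum independent set of $T'$ extends this way, which gives $\alpha(T)=\alpha(T')+2$ and, more importantly, $\sharp\alpha(T)=\sharp\alpha(T')$.

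It then only remains to compare the bounds. With $\alpha'=\alpha(T')=\alpha-2$ and $n'=n-3$ one computes
$$2n'-3\alpha'+1=2(n-3)-3(\alpha-2)+1=2n-3\alpha+1,$$
so the exponent in Theorem~\ref{theorem4} is unchanged. Since $n'<n$, the minimality of $n$ yields $\sharp\alpha(T')\leq\left(\frac{1+\sqrt{5}}{2}\right)^{2n'-3\alpha'+1}=\left(\frac{1+\sqrt{5}}{2}\right)^{2n-3\alpha+1}$, hence $\sharp\alpha(T)=\sharp\alpha(T')\leq\left(\frac{1+\sqrt{5}}{2}\right)^{2n-3\alpha+1}$, contradicting the assumption that $T$ is a counterexample. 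The only step that needs a little care is the bijective argument establishing $\sharp\alpha(T)=\sharp\alpha(T')$, but since the deleted gadget consists of just two endvertices pendant at $v$, I do not expect any real obstacle there.
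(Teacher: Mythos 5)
Your proposal is correct and follows essentially the same route as the paper: assume $d_T(v)=3$, observe that every maximum independent set contains both (endvertex) children of $v$ but not $v$, delete $V_v$ to get $T'$ with $n'=n-3$ and $\alpha'=\alpha-2$, note the exponent $2n-3\alpha+1$ is unchanged, and contradict the minimality of $n$. Your extra detail on the exchange argument and the bijection is exactly what the paper leaves implicit.
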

\begin{proof}[Proof of Claim \ref{claim3}]
Suppose, for a contradiction, that $d_T(v)=3$.
Note that every maximum independent set in $T$ 
contains both children of $v$ but not $v$.
Hence, 
the subcubic tree $T'=T-V(T_v)$ 
has order $n-3$ and independence number $\alpha-2$,
and satisfies $\sharp\alpha(T)=\sharp\alpha(T')$.
By the choice of $n$, we obtain
$$\sharp\alpha(T)=\sharp\alpha(T')\leq\left(\frac{1+\sqrt{5}}{2}\right)^{2\cdot(n-3)-3\cdot(\alpha-2)+1}=\left(\frac{1+\sqrt{5}}{2}\right)^{2n-3\alpha+1},$$
which contradicts the choice of $T$.
\end{proof}

\begin{claim}\label{claim4}
$w$ is not a support vertex.
\end{claim}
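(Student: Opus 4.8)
The plan is to assume, for a contradiction, that $w$ is a support vertex, fix an endvertex $z$ adjacent to $w$, and derive a contradiction with the choice of $T$. Writing $\varphi=\frac{1+\sqrt{5}}{2}$, so that $\varphi^2=\varphi+1$, I would first note that $v$ is not an endvertex — its two neighbours are $u$ and $w$ since $d_T(v)=2$ by Claim \ref{claim3} — so $z\neq v$. Next I would split off a degenerate case: if $z$ is not a child of $w$, then $z$ is the neighbour of $w$ on the path $uvwx\ldots r$, so that path equals $uvwz$ and has length $3$; hence $T$ is a tree of diameter $3$, i.e.\ a double star, and then $d_T(v)=2$ together with the subcubicity of $w$ forces $n\le 5$. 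A direct inspection of the (few) subcubic double stars of order at most $5$ shows that none violates the bound of Theorem \ref{theorem4}, contradicting the choice of $T$. So $z$ is a child of $w$; since $w$ has a parent (as the path $uvwx\ldots r$ has length at least $3$ by Claim \ref{claim2}) and $v$ is a second child of $w$, this gives $d_T(w)=3$.

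In the remaining case I would compare $T$ with the two trees $T_1=T-\{u,v\}$ and $T_2=T-\{u,v,w,z\}$. Each arises from $T$ by deleting a branch, so each is again a subcubic tree, of order $n-2$ and $n-4$ respectively (and $n\ge 5$, so $T_2$ is non-empty). A routine argument should give $\alpha(T_1)=\alpha-1$ and $\alpha(T_2)=\alpha-2$: in both cases ``$\ge$'' comes from restricting a maximum independent set of $T$, and ``$\le$'' from the fact that every maximum independent set of $T_1$, resp.\ $T_2$, extends to an independent set of $T$ that is larger by $1$, resp.\ by $2$, by adjoining $u$, resp.\ $u$ and $z$.

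The key step, and the one I expect to require the most care, is the identity
\[
\sharp\alpha(T)=\sharp\alpha(T_1)+\sharp\alpha(T_2).
\]
To establish it, I would observe that every maximum independent set $S$ of $T$ contains exactly one of $u$ and $v$ (not both, since $uv\in E(T)$; not neither, since otherwise $S\cup\{u\}$ would be larger). Those with $u\in S$ are in bijection with the maximum independent sets of $T_1$, via $S\mapsto S\setminus\{u\}$. Those with $v\in S$ satisfy $u,w\notin S$ and hence $z\in S$; they are in bijection with the maximum independent sets of $T_2$, via $S\mapsto S\setminus\{v,z\}$. Adding these two counts gives the identity; the points needing attention are that $S\setminus\{v,z\}$ really lands in $V(T_2)$ and has the right size, and that the inverse maps produce genuine maximum independent sets, both of which rest on the independence-number computations above.

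Finally, since $n(T_1)$ and $n(T_2)$ are both smaller than $n$, the minimality of $n$ gives $\sharp\alpha(T_i)\le\varphi^{\,2n(T_i)-3\alpha(T_i)+1}$ for $i\in\{1,2\}$, hence
\[
\sharp\alpha(T)\le\varphi^{\,2n-3\alpha}+\varphi^{\,2n-3\alpha-1}=(\varphi+1)\,\varphi^{\,2n-3\alpha-1}=\varphi^{\,2n-3\alpha+1},
\]
contradicting the assumption that $T$ violates Theorem \ref{theorem4}. So the concluding estimate is immediate from $\varphi^2=\varphi+1$ once the additive identity is in hand, which is why I regard the identity (and the surrounding independence-number bookkeeping) as the only real obstacle.
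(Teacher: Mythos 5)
Your proof is correct and follows essentially the same route as the paper: partition the maximum independent sets of $T$ according to whether they contain $u$, identify the two classes with the maximum independent sets of $T-\{u,v\}$ and of $T-\{u,v,w,z\}$ (the paper's $T'=T-V(T_v)$ and $T''=T-V(T_w)$), and conclude via the minimality of $n$ and $\varphi^{-1}+\varphi^{-2}=1$. Your extra care with the degenerate case where the endvertex adjacent to $w$ is its parent (the $P_4$/double-star situation) is a reasonable tidying-up of a detail the paper glosses over, but it does not change the argument.
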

\begin{proof}[Proof of Claim \ref{claim4}]
Suppose, for a contradiction, that $w$ is a support vertex.
The subcubic tree $T'=T-V(T_v)$
has order $n-2$ and independence number $\alpha-1$, 
while the subcubic tree $T''=T-V(T_w)$
has order $n-4$ and independence number $\alpha-2$. 
Since there are 
$\sharp\alpha(T')$ maximum independent sets in $T$ that contain $u$,
and 
$\sharp\alpha(T'')$ maximum independent sets in $T$ that do not contain $u$,
the choice of $n$ implies
\begin{eqnarray*}
\sharp\alpha(T)&=&\sharp\alpha(T')+\sharp\alpha(T'')\\
&\leq &  \left(\frac{1+\sqrt{5}}{2}\right)^{2\cdot(n-2)-3\cdot(\alpha-1)+1}+ \left(\frac{1+\sqrt{5}}{2}\right)^{2\cdot(n-4)-3\cdot(\alpha-2)+1}\\
&=& \left(\frac{1+\sqrt{5}}{2}\right)^{2n-3\alpha+1}\left( \left(\frac{1+\sqrt{5}}{2}\right)^{-1}+ \left(\frac{1+\sqrt{5}}{2}\right)^{-2}\right)\\
&=&  \left(\frac{1+\sqrt{5}}{2}\right)^{2n-3\alpha+1},
\end{eqnarray*}
which contradicts the choice of $T$.
\end{proof}

\begin{claim}\label{claim5}
$d_T(w)=2$.
\end{claim}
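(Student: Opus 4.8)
The plan is to assume, for a contradiction, that $d_T(w)=3$ and to deduce $\sharp\alpha(T)\le\left(\frac{1+\sqrt5}{2}\right)^{2n-3\alpha+1}$, contradicting the choice of $T$. Throughout, write $\phi=\frac{1+\sqrt5}{2}$, so that $\phi^2=\phi+1$. By Claim \ref{claim3} the vertex $v$ has degree $2$, hence its neighbours are exactly $u$ and $w$; since $d_T(w)=3$, besides $v$ the vertex $w$ has its parent $x$ on the longest path and exactly one further child $v'$. I would work with the tree $T'=T-\{u,v\}$ and the forest $T^*=T-\{u,v,w\}$. Exactly as in the proof of Claim \ref{claim4}, one checks that $\alpha(T')=\alpha-1$ and that $S\mapsto S\setminus\{u\}$ is a bijection between the maximum independent sets of $T$ that contain $u$ and the maximum independent sets of $T'$, so that this family contributes $\sharp\alpha(T')\le\phi^{2(n-2)-3(\alpha-1)+1}=\phi^{2n-3\alpha}$ by the minimality of $n$. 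Since the only edges of $T$ leaving $\{u,v,w\}$ are $wv'$ and $wx$, the forest $T^*$ has exactly two components, namely $T-V_w$ (the part through $x$) and $T[V_{v'}]$, both of which are subcubic trees of order smaller than $n$.

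Next I would split into cases according to the value of $\alpha(T^*)$. Adjoining $v$ (whose neighbours $u$ and $w$ lie outside $T^*$) to a maximum independent set of $T^*$ shows $\alpha(T^*)\le\alpha-1$, while $|S\cap\{u,v,w\}|\le 2$ for every independent set $S$ of $T$ forces $\alpha(T^*)\ge\alpha-2$; hence $\alpha(T^*)\in\{\alpha-1,\alpha-2\}$. In the case $\alpha(T^*)=\alpha-2$ I would argue that every maximum independent set of $T$ contains $u$: a maximum independent set missing $u$ must contain $v$ (otherwise $u$ could be added), hence must miss $w$, and its restriction to $V(T^*)$ would then be an independent set of $T^*$ of size $\alpha-1>\alpha(T^*)$, which is impossible. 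Therefore $\sharp\alpha(T)=\sharp\alpha(T')\le\phi^{2n-3\alpha}<\phi^{2n-3\alpha+1}$, the desired contradiction.

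It remains to treat $\alpha(T^*)=\alpha-1$. Here the maximum independent sets of $T$ not containing $u$ are precisely the sets $\{v\}\cup S^*$ with $S^*$ a maximum independent set of $T^*$: one inclusion is immediate, and for the other, a maximum independent set of $T$ missing $u$ must contain $v$ and miss $w$, so its restriction to $V(T^*)$ is an independent set of size $\alpha-1=\alpha(T^*)$. Hence this family contributes $\sharp\alpha(T^*)$. Since $T^*$ splits into two components, $\sharp\alpha(T^*)$ is the product of $\sharp\alpha$ over these two components, and the minimality of $n$ applied to each gives $\sharp\alpha(T^*)\le\phi^{2n(T^*)-3\alpha(T^*)+2}=\phi^{2(n-3)-3(\alpha-1)+2}=\phi^{2n-3\alpha-1}$. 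Altogether $\sharp\alpha(T)=\sharp\alpha(T')+\sharp\alpha(T^*)\le\phi^{2n-3\alpha}+\phi^{2n-3\alpha-1}=\phi^{2n-3\alpha-1}(\phi+1)=\phi^{2n-3\alpha+1}$, again contradicting the choice of $T$. Thus $d_T(w)=2$.

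The step I expect to be most delicate is making the two reductions exact: verifying that a maximum independent set of $T$ avoiding $u$ is forced to contain $v$ and avoid $w$ (so that the extra $+1$ contributed by $v$ is counted once and only once), and pinning down that $d_T(w)=3$ makes $T^*$ break into exactly two pieces. This two-component split is exactly what turns the bound for $\sharp\alpha(T^*)$ into $\phi^{2n-3\alpha-1}$ and lets the identity $\phi^2=\phi+1$ close the estimate; with only one component, or with a different count of deleted vertices, the exponents would not match and the argument would fail.
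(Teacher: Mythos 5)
Your proof is correct, but it takes a genuinely different route from the paper's. The paper keeps the vertex $w$ and instead deletes the two pendant paths below it: using Claims \ref{claim3} and \ref{claim4} it identifies the second child $v'$ of $w$ as a support vertex with a single leaf child $u'$, passes to $T'=T-\{u,v,u',v'\}$ (order $n-4$, independence number $\alpha-2$), shows via the injection $I'\mapsto (I'\setminus\{x\})\cup\{w\}$ that at most half of the maximum independent sets of $T'$ avoid $w$, counts $4$ extensions for those and $1$ for the rest, and closes with $\tfrac52\,\phi^{2n-3\alpha-1}<\phi^{2n-3\alpha+1}$ from $\tfrac52<\phi^2$. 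You instead split on whether $u$ lies in the maximum independent set, reduce to $T-\{u,v\}$ and to the two-component forest $T-\{u,v,w\}$, and use multiplicativity over components together with $\phi+1=\phi^2$ to land exactly on $\phi^{2n-3\alpha+1}$. Your decomposition is closer in spirit to the paper's proof of Claim \ref{claim4} and to its final argument after Claim \ref{claim8} than to its own proof of Claim \ref{claim5}; it is more robust in that it never inspects the subtree below $v'$ (so Claim \ref{claim4} is not actually needed), at the cost of the case distinction on $\alpha(T^*)\in\{\alpha-1,\alpha-2\}$ --- a distinction you handle correctly, and which is genuinely necessary, since for $\alpha(T^*)=\alpha-2$ the inductive bound on $\sharp\alpha(T^*)$ has the wrong exponent and one must instead observe that every maximum independent set contains $u$. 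The only cosmetic point worth adding is the (immediate) remark that $T-\{u,v\}$ and the two components of $T-\{u,v,w\}$ are nonempty subcubic trees, so that the minimality of $n$ applies to each; and note that the weak inequality $\sharp\alpha(T)\le\phi^{2n-3\alpha+1}$ you obtain suffices for the contradiction, exactly as in the paper's Claim \ref{claim4}.
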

\begin{proof}[Proof of Claim \ref{claim5}]
Suppose, for a contradiction, that $w$ has a child $v'$ distinct from $v$.
By Claims \ref{claim3} and \ref{claim4}, 
the vertex $v'$ has exactly one child $u'$,
which is an endvertex.
The subcubic tree $T'=T-\{u,v,u',v'\}$ 
has order $n-4$ and independence number $\alpha-2$. 
Since for every maximum independent set $I'$ of $T'$ that does not contain $w$,
we have $x\in I'$, and $(I'\setminus \{ x\})\cup \{ w\}$ is a 
maximum independent set in $T'$ that contains $w$,
there are at most $\frac{\sharp\alpha(T')}{2}$ maximum independent sets in $T'$ 
that do not contain $w$, and 
at least $\frac{\sharp\alpha(T')}{2}$ maximum independent sets in $T'$ 
that contain $w$. 
A maximum independent set in $T'$ that contains $w$ 
can only be extended in a unique way to a maximum independent set in $T$, 
while a maximum independent set in $T'$ that does not contain $w$ 
can be extended in four different ways to a maximum independent set in $T$. 
Since all maximum independent sets in $T$ are of one of these types,
the choice of $n$ implies
\begin{eqnarray*}
\sharp\alpha(T)&\leq&4\cdot\frac{\sharp\alpha(T')}{2}+\frac{\sharp\alpha(T')}{2}\\
&\leq &  \frac{5}{2}\cdot\left(\frac{1+\sqrt{5}}{2}\right)^{2\cdot(n-4)-3\cdot(\alpha-2)+1}\\
&=&  \frac{5}{2}\cdot\left(\frac{1+\sqrt{5}}{2}\right)^{-2} \left(\frac{1+\sqrt{5}}{2}\right)^{2n-3\alpha+1}\\
&<&  \left(\frac{1+\sqrt{5}}{2}\right)^{2n-3\alpha+1},
\end{eqnarray*}
using $\frac{5}{2}<\left(\frac{1+\sqrt{5}}{2}\right)^{2}$,
which contradicts the choice of $T$.
\end{proof}
Since $\sharp\alpha(P_4)=3<\left(\frac{1+\sqrt{5}}{2}\right)^{2\cdot 4-3\cdot2+1}$ we may assume that $x$ has a parent $y$. 

\begin{claim}\label{claim6}
$x$ is not a support vertex.
\end{claim}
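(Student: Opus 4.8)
The plan is to assume, for a contradiction, that $x$ is a support vertex, and to split a pendant piece off $T$ so that the minimality of $n$ applies to what remains. Write $\varphi=\frac{1+\sqrt{5}}{2}$. The goal is to produce a subcubic tree $T'$ with $n(T')=n-5$, $\alpha(T')=\alpha-3$, and $\sharp\alpha(T')=\sharp\alpha(T)$; applying the minimality of $n$ to $T'$ then gives
\[
\sharp\alpha(T)=\sharp\alpha(T')\le \varphi^{\,2(n-5)-3(\alpha-3)+1}=\varphi^{\,2n-3\alpha}<\varphi^{\,2n-3\alpha+1},
\]
contradicting the choice of $T$.

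The first step is to pin down the shape of $T$ near $x$. By Claim~\ref{claim5} the vertex $w$ has degree $2$ and has $v$ as a child, so $w$ is not an endvertex; hence a leaf neighbour of $x$ must be either a child $x'$ of $x$ or the parent $y$ of $x$. If such a leaf is a child $x'$, then $w$, $y$, $x'$ are three distinct neighbours of $x$, so $d_T(x)=3$. If instead only $y$ is a leaf, then $y$ is an endpoint of the longest path, which is therefore $uvwxy$ and has length $4$; now $d_T(x)=2$ forces $T=P_5$ (using also $d_T(v)=d_T(w)=2$), which is impossible since $\sharp\alpha(P_5)=1<\varphi^{2}=\varphi^{2\cdot 5-3\cdot 3+1}$, whereas if $d_T(x)=3$ then the third neighbour $x''$ of $x$ must be a leaf, since a non-leaf $x''$ would extend $uvwx$ into a path of length at least $5$. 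So in every surviving case $d_T(x)=3$, the vertex $x$ has a leaf child $x'$, and $u,v,w$ form a pendant path attached at $x$ with $u$ an endvertex.

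The main step is to show that every maximum independent set $I$ of $T$ contains $\{u,w,x'\}$ and avoids $\{v,x\}$; the feature that makes the required exchanges work is that every neighbour of $u$, of $w$, and of $x'$ lies in $\{u,v,w,x,x'\}$. First, $x\notin I$: if $x\in I$ then $w,x'\notin I$ and $|I\cap\{u,v\}|\le 1$, so replacing $\{x\}\cup(I\cap\{u,v\})$ inside $I$ by $\{u,w,x'\}$ yields an independent set of size at least $|I|+1$. Consequently $x'\in I$, since otherwise $I\cup\{x'\}$ is larger. Finally, since $x\notin I$, the set $I\cap\{u,v,w\}$ is an independent set of the induced path on $u$, $v$, $w$ with no constraint imposed on $w$, so if it had size at most $1$ it could be replaced by $\{u,w\}$ to enlarge $I$; hence $I\cap\{u,v,w\}=\{u,w\}$.

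Granting this, put $T'=T-\{u,v,w,x,x'\}$. The parent $y$ of $x$ lies in $T'$, so $T'$ is a nonempty subcubic tree with $n(T')=n-5$. By the main step, $I\mapsto I\setminus\{u,w,x'\}$ and $K\mapsto K\cup\{u,w,x'\}$ are mutually inverse bijections between the maximum independent sets of $T$ and those of $T'$, so $\alpha(T')=\alpha-3$ and $\sharp\alpha(T')=\sharp\alpha(T)$, and the displayed inequality finishes the proof. I expect the main obstacle to be the first step: one has to dispose of the small exceptional tree $P_5$ and verify that a support vertex $x$ really has a leaf \emph{child} (equivalently, that $T'$ is nonempty), rather than merely a leaf neighbour. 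The three exchange arguments of the main step are routine once one has recorded that the neighbourhoods of $u$, $w$, and $x'$ stay inside $\{u,v,w,x,x'\}$.
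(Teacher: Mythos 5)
Your proof is correct and follows essentially the same route as the paper: both arguments hinge on showing that every maximum independent set of $T$ must contain $u$, $w$ and the endvertex adjacent to $x$ while avoiding $v$ and $x$, and then pass to a smaller tree via the minimality of $n$ (the paper deletes only $\{u,v,w\}$, keeping the exponent $2n-3\alpha+1$ unchanged, while you delete all of $\{u,v,w,x,x'\}$ and gain a factor $\left(\frac{1+\sqrt{5}}{2}\right)^{-1}$ — an immaterial difference). You are in fact slightly more careful than the paper in disposing of the possibility that the endvertex adjacent to $x$ is its parent $y$, i.e.\ that $T=P_5$, a case the paper's phrasing silently skips.
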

\begin{proof}[Proof of Claim \ref{claim6}]
Suppose, for a contradiction, that $x$ has a child $w'$ that is an endvertex. 
The subcubic tree $T'=T-\{u,v,w\}$ 
has order $n-3$ and independence number $\alpha-2$. 
Every maximum independent set $I$ of $T$ contains $u$, $w$, and $w'$,
and $I\setminus \{ u,w\}$ is a maximum independent set in $T'$.
By the choice of $n$, this implies
$$\sharp\alpha(T)\leq\sharp\alpha(T')\leq\left(\frac{1+\sqrt{5}}{2}\right)^{2\cdot(n-3)-3\cdot(\alpha-2)+1}=\left(\frac{1+\sqrt{5}}{2}\right)^{2n-3\alpha+1},$$
which contradicts the choice of $T$.
\end{proof}

\begin{claim}\label{claim7}
$x$ has no child that is a support vertex.
\end{claim}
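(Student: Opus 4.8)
The plan is to assume, for a contradiction, that $x$ has a child $w'$ distinct from $w$ that is a support vertex, with an endvertex $u'$ adjacent to $w'$, and to contradict the choice of $T$. I abbreviate $\varphi=\frac{1+\sqrt{5}}{2}$, so that the bound to be proved reads $\sharp\alpha(T)\le\varphi^{2n-3\alpha+1}$ and $\varphi^{2}=\varphi+1$, $\varphi^{3}=2\varphi+1=2+\sqrt{5}>4$. The first step is to determine the subtree $T_{w'}$. Since $uvwx\ldots r$ is a longest path rooted at $r$, the subtree $T_x$ has depth exactly $3$ (a leaf of $T_x$ farther from $x$ than $u$ would give a path longer than $uvwx\ldots r$), so $T_{w'}$ has depth at most $2$; moreover, if $w'$ had a child $v''$ that is not an endvertex, then an endvertex $u''$ below $v''$ would lie at distance $2$ from $w'$, so that $u''v''w'x\ldots r$ is a longest path, and Claim \ref{claim4} applied to it would say that $w'$ is not a support vertex, a contradiction. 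Here I use that Claims \ref{claim2}, \ref{claim3} and \ref{claim4} hold for every longest path, not just the one fixed at the start, exactly as in the proof of Claim \ref{claim5}. Consequently every child of $w'$ is an endvertex, and since $T$ is subcubic and $x$ is the parent of $w'$, the vertex $w'$ has exactly one or exactly two endvertex children.

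If $w'$ has two endvertex children, then, just as in the proof of Claim \ref{claim3}, every maximum independent set of $T$ contains both of them but avoids $w'$, so that $\sharp\alpha(T)=\sharp\alpha(T')$, where $T'=T-V(T_{w'})$ is a subcubic tree of order $n-3$ and independence number $\alpha-2$. By the choice of $n$ this gives $\sharp\alpha(T)\le\varphi^{2(n-3)-3(\alpha-2)+1}=\varphi^{2n-3\alpha+1}$, a contradiction.

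So I may assume that $w'$ has a unique child $u'$, which is an endvertex; then $d_T(w')=2$ and, since the neighbors of $x$ are precisely $y$, $w$, and $w'$, also $d_T(x)=3$. By Claims \ref{claim3} and \ref{claim5}, the subtree $T_x$ is the tree on the six vertices $x,w,v,u,w',u'$ with edge set $\{xw,wv,vu,xw',w'u'\}$, and a direct check shows that $\alpha(T_x)=3$ and that $T_x$ has exactly four maximum independent sets, at least one of which avoids $x$. Put $S=T-V(T_x)$, a subcubic tree of order $n-6$ that is joined to $T_x$ only by the edge $xy$. Since $T_x$ has a maximum independent set avoiding $x$, we obtain $\alpha(T)=\alpha(S)+3$, hence $\alpha(S)=\alpha-3$; and since $\alpha(T_x)$ and $\alpha(S)$ are the largest possible contributions of the two sides, every maximum independent set $I$ of $T$ meets $V(T_x)$ in one of the four maximum independent sets of $T_x$ and meets $V(S)$ in a maximum independent set of $S$. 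For each of the four choices of $I\cap V(T_x)$, the set $I\cap V(S)$ may be an arbitrary maximum independent set of $S$, required only to avoid $y$ when $x\in I$, so there are at most $\sharp\alpha(S)$ possibilities; hence $\sharp\alpha(T)\le 4\,\sharp\alpha(S)$. By the choice of $n$ this yields $\sharp\alpha(T)\le 4\,\varphi^{2(n-6)-3(\alpha-3)+1}=4\varphi^{-3}\,\varphi^{2n-3\alpha+1}<\varphi^{2n-3\alpha+1}$, using $\varphi^{3}>4$, which contradicts the choice of $T$.

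The step I expect to be the main obstacle is precisely the case $d_T(w')=2$. The obvious reduction that deletes only $T_{w'}$ replaces $(n,\alpha)$ by $(n-2,\alpha-1)$, which divides the target bound by $\varphi$, whereas $\sharp\alpha(T)$ can be as large as $2\,\sharp\alpha(T-V(T_{w'}))$ and $2>\varphi$; thus that reduction is too lossy, and one is forced to remove the whole six-vertex subtree $T_x$ and exploit the exact value $\sharp\alpha(T_x)=4$, which only barely undercuts $\varphi^{3}=2+\sqrt{5}\approx 4.236$. This is one of the places where the slack in the stated bound is genuinely needed. A smaller technical point is the claim that every child of $w'$ is an endvertex, which rests on the validity of Claims \ref{claim2}, \ref{claim3} and \ref{claim4} for every longest path, in the style of the proof of Claim \ref{claim5}.
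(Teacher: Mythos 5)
Your proof is correct and follows essentially the same route as the paper: after reducing to the case where $w'$ has a unique endvertex child, both arguments delete $V(T_x)$, use that $T_x$ has exactly four maximum independent sets, and conclude via $4<\left(\frac{1+\sqrt{5}}{2}\right)^{3}$. The only cosmetic difference is that you dispose of the sub-case where $w'$ has a non-endvertex child by reapplying Claim \ref{claim4} to another longest path, whereas the paper appeals to a Claim \ref{claim5}-style count; both are valid and consistent with how the paper itself reuses Claims \ref{claim3} and \ref{claim4} for sibling paths.
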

\begin{proof}[Proof of Claim \ref{claim7}]
Suppose, for a contradiction, that $x$ has a child $w'$ that is a support vertex.
If $w'$ has two children that are endvertices,
then arguing as in the proof of Claim \ref{claim3} yields a contradiction.
If $w'$ has a child that is not an endvertex,
then $d_T(w')=3$, 
which leads to a similar contradiction as in the proof of Claim \ref{claim5}.
Hence, $w'$ has a unique child $v'$, which is an endvertex.
The subcubic tree $T'=T-V(T_x)$ 
has order $n-6$ and independence number $\alpha-3$. 
A maximum independent set $I'$ of $T'$ can be extended 
in at most four different ways to a maximum independent set in $T$: 
$I'\cup\{u,v',x\}$, $I'\cup\{v,v',x\}$, $I'\cup\{u,w,w'\}$ and $I'\cup\{u,v',w\}$. 
Since all maximum independent sets in $T$ are of such a form,
the choice of $n$ implies
$$\sharp\alpha(T)\leq 4 \sharp\alpha(T')\leq 4\left(\frac{1+\sqrt{5}}{2}\right)^{2\cdot(n-6)-3\cdot(\alpha-3)+1}<\left(\frac{1+\sqrt{5}}{2}\right)^{2n-3\alpha+1},$$
using $4<\left(\frac{1+\sqrt{5}}{2}\right)^{3}$,
which contradicts the choice of $T$.
\end{proof}

\begin{claim}\label{claim8}
$d_T(x)=2$.
\end{claim}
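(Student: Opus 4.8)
The plan is to assume, for a contradiction, that $d_T(x)=3$, and to derive $\sharp\alpha(T)\le\varphi^{2n-3\alpha+1}$ (writing $\varphi=\frac{1+\sqrt{5}}{2}$ for brevity), which contradicts the choice of $T$. Let $w'$ be the child of $x$ distinct from $w$. Since $uvwx\ldots r$ is a longest path and $u$ lies at distance $3$ below $x$, the subtree $T_x$ has height at most $3$, so $T_{w'}$ has height at most $2$. By Claim~\ref{claim6} the vertex $w'$ is not an endvertex, and by Claim~\ref{claim7} it is not a support vertex; hence every child $v'$ of $w'$ is a support vertex all of whose children are endvertices. As $T$ is subcubic, $w'$ has $c\in\{1,2\}$ children, and writing $L$ for the number of endvertices of $T_{w'}$, we have $c\le L\le 2c$ and $n(T_{w'})=1+c+L$.

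Next I would establish that $T_x$ is completely rigid. A short exchange argument --- in any independent set of $T_{w'}$, replacing a child $v'$ of $w'$ by the (at least one) endvertices below $v'$ never decreases the size and keeps $w'$ available --- shows $\alpha(T_{w'})=1+L$ and that the unique maximum independent set of $T_{w'}$ is $\{w'\}$ together with all $L$ endvertices. By Claims~\ref{claim3} and~\ref{claim5}, $T_w$ is the path $uvw$, with $\alpha(T_w)=2$ and unique maximum independent set $\{u,w\}$. Comparing, inside $T_x$, the option of taking $x$ (which yields an independent set of size at most $1+1+L$) with the option of omitting $x$ (which yields $2+(1+L)$) gives $\alpha(T_x)=3+L$, attained only by independent sets that avoid $x$, together with $\sharp\alpha(T_x)=\sharp\alpha(T_w)\cdot\sharp\alpha(T_{w'})=1$.

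Since every maximum independent set of $T_x$ avoids $x$, and the only edge joining $V(T_x)$ to the rest of $T$ is the one from $x$ to its parent $y$, the tree $T$ splits like a disjoint union for counting purposes: with $T':=T-V(T_x)$ we obtain $\alpha(T)=\alpha(T')+\alpha(T_x)=\alpha(T')+3+L$ and $\sharp\alpha(T)=\sharp\alpha(T')\cdot\sharp\alpha(T_x)=\sharp\alpha(T')$. Now $T'$ is a subcubic tree with $n(T')=n-(5+c+L)\ge 1$ (as $y\in V(T')$) and $\alpha(T')=\alpha-3-L$, so
$$2n(T')-3\alpha(T')+1=2n-3\alpha+L-2c\le 2n-3\alpha+1,$$
using $L\le 2c$. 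Because $n(T')<n$ and $\varphi>1$, the choice of $n$ yields $\sharp\alpha(T)=\sharp\alpha(T')\le\varphi^{2n(T')-3\alpha(T')+1}\le\varphi^{2n-3\alpha+1}$, the desired contradiction. I expect the main obstacle to be the rigidity step of the second paragraph: carefully verifying $\alpha(T_{w'})=1+L$ and the uniqueness of the maximum independent set of $T_{w'}$, and propagating these to $\alpha(T_x)=3+L$ (attained only away from $x$) and $\sharp\alpha(T_x)=1$. Once this is in place, the disjoint-union reduction and the arithmetic $L\le 2c$ are routine, entirely in the spirit of Claims~\ref{claim3}--\ref{claim7}.
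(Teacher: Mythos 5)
Your proof is correct and follows essentially the same route as the paper: delete $V(T_x)$, show that the unique maximum independent set of $T_x$ avoids $x$, conclude $\sharp\alpha(T)=\sharp\alpha(T-V(T_x))$, and check that the exponent $2n-3\alpha+1$ does not increase. The only difference is that the paper pins down $d_T(w')=d_T(v')=2$ by applying Claims \ref{claim3} and \ref{claim5} to the longest path through $w'$, whereas you carry the extra branching parameters $c$ and $L$ through the same computation; both work.
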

\begin{proof}[Proof of Claim \ref{claim8}]
Suppose, for a contradiction, 
that $x$ has a child $w'$ distinct from $w$. 
By Claims \ref{claim6} and \ref{claim7},
$w'$ has a child $v'$ that has a child $u'$.
By Claims \ref{claim3} and \ref{claim5}, 
$d_T(w')=d_T(v')=2$.
The subcubic tree $T'=T-V(T_x)$
has order $n-7$ and independence number $\alpha-4$. 
Note that every maximum independent set in $T'$ 
can be extended in a unique way to a maximum independent set in $T$, 
and that the maximum independent sets in $T$ are exactly those sets. 
Hence, by the choice of $n$, we obtain 
$$\sharp\alpha(T)\leq \sharp\alpha(T')\leq\left(\frac{1+\sqrt{5}}{2}\right)^{2\cdot(n-7)-3\cdot(\alpha-4)+1}<\left(\frac{1+\sqrt{5}}{2}\right)^{2n-3\alpha+1}.$$
\end{proof}
By the above claims, we know that $d_T(v)=d_T(w)=d_T(x)=2$. 
Let 
$T'=T-V(T_x)$,
$T_1=T-\{vu\}+\{xu\}$, 
and $T''=T_1-\{v,w\}$. 
Clearly, all these trees are subcubic.

A maximum independent set in $T'$ that contains $y$ 
can only be extended in a unique way 
to a maximum independent set in $T$, 
and all maximum independent set in $T$ that contain $y$ are of that form. 
A maximum independent set $I'$ of $T'$ that does not contain $y$ 
can be extended to a maximum independent set $I$ of $T$ in three ways, $I'\cup\{u,w\}$, $I'\cup\{u,x\}$ and, $I'\cup\{v,x\}$, 
and every maximum independent set in $T$ that does not contain $y$ 
is of that form.

Similarly,
a maximum independent set in $T'$ that contains $y$ 
can be extended to a maximum independent set in $T_1$ 
in two different ways,
and all maximum independent set in $T_1$ that contain $y$ are of that form. 
A maximum independent set $I'$ of $T'$ that does not contain $y$ 
can be extended to a maximum independent set $I_1$ of $T_1$ in three ways, $I'\cup\{u,w\}$, $I'\cup\{u,v\}$ and, $I'\cup\{v,x\}$, 
and every maximum independent set in $T_1$ that does not contain $y$ 
is of that form. 
Arguing as in the proof of Claim \ref{claim4}, we obtain
$$\sharp\alpha(T)\leq\sharp\alpha(T_1)=\sharp\alpha(T')+\sharp\alpha(T'')\leq\left(\frac{1+\sqrt{5}}{2}\right)^{2n-3\alpha+1}.$$
This final contradiction completes the proof.
\end{proof}

\end{document}